\newtheorem{Def}{Definition}[section]
\newtheorem{Th}[Def]{Theorem}
\newtheorem{Prop}[Def]{Proposition}
\newtheorem{Cor}[Def]{Corollary}
\newtheorem{Rem}[Def]{Remark}
\newtheorem{Ex}[Def]{Example}
\newtheorem{Nota}[Def]{Notation}
\newcommand{\R}{\mathbb{R}}
\newcommand{\Z}{\mathbb{Z}}
\newcommand{\C}{\mathbb{C}}
\newcommand{\CC}{\mathcal{C}}
\newcommand{\CE}{\mathcal{E}}
\newcommand{\DS}{\displaystyle }
\newcommand{\al}{\alpha }
\newcommand{\be}{\beta }
\newcommand{\ga}{\gamma }
\newcommand{\de}{\delta }
\newcommand{\Ga}{\Gamma }
\newcommand{\De}{\Delta }
\newcommand{\vep}{\varepsilon }
\newcommand{\vph}{\varphi }
\newcommand{\la}{\lambda }
\newcommand{\om}{\omega }
\newcommand{\na}{\nabla }
\newcommand{\pa}{\partial }
\newcommand{\re}{{\rm Re}}
\newcommand{\bu}{\bullet}
\newcommand{\si}{\sigma}
\title[Twisted cycles of $F_A$]
{Twisted period relations for Lauricella's hypergeometric function $F_A$}
\author[Y. Goto]{Yoshiaki GOTO}
\address{Department of Mathematics, Hokkaido University, Sapporo 060-0810, Japan }
\email{y-goto@math.sci.hokudai.ac.jp}
\subjclass[2010]{33C65}
\keywords{Lauricella's $F_A$, twisted homology groups, twisted period relations.}
\dedicatory{}
\begin{document}

\begin{abstract}
We study Lauricella's hypergeometric function $F_A$ of $m$-variables 
and the system $E_A$ of differential equations annihilating $F_A$, 
by using twisted (co)homology groups. 
We construct twisted cycles with respect to an integral representation 
of Euler type of $F_A$. 
These cycles correspond to $2^m$ linearly independent solutions to 
$E_A$, which are expressed by hypergeometric series $F_A$. 
Using intersection forms of twisted (co)homology groups, we obtain 
twisted period relations which give quadratic relations for Lauricella's $F_A$.

\end{abstract}

\maketitle

\section{Introduction}\label{section-intro}
Lauricella's hypergeometric series $F_A$ of $m$-variables $x_1 ,\ldots ,x_m$ 
with complex parameters $a,b_1,\dots ,b_m ,c_1 ,\dots ,c_m$ is defined by 
\begin{align*}
  F_A (a,b,c ;x ) 
  =\sum_{n_{1} ,\ldots ,n_{m} =0} ^{\infty } 
  \frac{(a,n_1 +\cdots +n_m )(b_1,n_1) \cdots (b_m ,n_m)}
  {(c_1 ,n_1 )\cdots (c_m ,n_m ) n_1 ! \cdots n_m !} x_1 ^{n_1} \cdots x_m ^{n_m},
\end{align*}
where $x=(x_1 ,\ldots ,x_m),\ b=(b_1 ,\ldots ,b_m),\ c=(c_1 ,\ldots ,c_m)$, 
$c_1 ,\ldots ,c_m \not\in \{ 0,-1,-2,\ldots \}$ and $(c_1 ,n_1)=\Gamma (c_1+n_1)/\Gamma (c_1)$. 
This series converges in the domain 
$$
D_A :=\left\{ (x_1 ,\ldots ,x_m ) \in \C ^m  \ \middle| \ \sum _{k=1} ^{m} |x_k| <1  \right\} ,
$$
and admits the integral representation (\ref{integral}). 
The system $E_A (a,b,c)$ of differential equations annihilating $F_A (a,b,c;x)$ is a holonomic system 
of rank $2^m$ with the singular locus $S$ given in (\ref{sing-locus}). 
There is a fundamental system of solutions to $E_A (a,b,c)$ in a simply connected domain in $D_A -S$, 
which is given in terms of Lauricella's hypergeometric series $F_A$ with different parameters, 
see (\ref{series-sol}) for their expressions. 


In this paper, we construct $2^m$ twisted cycles which represent elements 
of the $m$-th twisted homology group 
concerning with the integral representation (\ref{integral}). 
They imply integral representations of the solutions (\ref{series-sol}) 
expressed by the series $F_A$. 
We evaluate the intersection numbers of these $2^m$ twisted cycles. 
Further, by using 
the intersection matrix of a basis of the twisted cohomology group in \cite{M-FA}, 
we give twisted period relations for two fundamental systems of $E_A$ 
with different parameters. 

In the study of twisted homology groups, twisted cycles 
given by bounded chambers are useful. 
For Lauricella's $F_A$, twisted cycles defined by $2^m$ bounded chambers 
are studied in \cite{MY-FA}. 
Though the integrals on these cycles are solutions to $E_A$, 
they do not give integral representations of 
the solutions (\ref{series-sol}), except for one cycle. 
We construct other twisted cycles from these $2^m$ bounded chambers 
by using a method introduced in \cite{G-FC}. 
For a subset $\{ i_1 ,\ldots ,i_r \}$ of $\{ 1,\ldots ,m \}$ of cardinality $r$, 
we construct a twisted cycle $\De_{i_1 \cdots i_r}$ from 
the direct product of an $r$-simplex and $(m-r)$ intervals, 
by a similar manner to \cite{G-FC}. 
See Section \ref{section-cycle}, for details. 
Our first main theorem states that this twisted cycle corresponds to 
the solution (\ref{series-sol}) 
expressed by the power function $\prod_{p=1}^r x_{i_p} ^{1-c_{i_p}}$ 
and the series $F_A$. 
Our construction has a simple combinatorial structure, and 
enables us to evaluate the intersection matrix formally. 
Once the intersection matrix for bases of twisted homology groups 
and that of twisted cohomology groups are 
evaluated, then we obtain twisted period relations 
which are originally identities among the integrals given by 
the pairings of elements of twisted homology and cohomology groups. 
Our first main theorem transforms these identities into 
quadratic relations among hypergeometric series $F_A$'s. 
Our second main theorem states these formulas in Section \ref{section-TPR}. 

As is in \cite{Beukers}, the irreducibility condition of the system $E_A (a,b,c)$ is 
known to be 
\begin{align*}
  b_1 ,\ldots ,b_m ,\ c_1 -b_1 ,\ldots ,c_m -b_m ,\ 
  a-\sum _{p=1}^{r} c_{i_r}  \not\in \Z 
\end{align*}
for any subset $\{ i_1 ,\ldots ,i_r \}$ of $\{ 1,\ldots ,m \}$. 
Since our interest is in the property of solutions to $E_A (a,b,c)$ 
expressed in terms of the hypergeometric series $F_A$, 
we assume throughout this paper that the parameters $a,\ b=(b_1, \ldots ,b_m)$  and 
$c=(c_1 ,\ldots ,c_m)$ 
satisfy the condition above and $c_1 ,\ldots ,c_m \not\in \Z$.

\section{Differential equations and integral representations}
In this section, we collect some facts about Lauricella's $F_A$ 
and the system $E_A$ of 
hypergeometric differential equations annihilating it. 

\begin{Nota}\label{k}
Throughout this paper, the letter $k$ always stands for an index running from $1$ to $m$. 
If no confusion is possible, 
$\DS \sum_{k=1} ^m$ and $\DS \prod_{k=1} ^m$ are often simply denoted by 
$\sum$ (or $\sum_k$) and $\prod$ (or $\prod_k$), respectively. 
For example, under this convention  
$F_A (a,b,c;x)$ is expressed as 
\begin{align*}
F_A (a,b,c ;x ) 
=\sum_{n_1 ,\ldots ,n_m =0} ^{\infty } 
 \frac{(a,\sum n_k ) \prod (b_k,n_k )}
 {\prod (c_k ,n_k )\cdot \prod n_k ! } \prod x_k ^{n_k} . 
\end{align*}
\end{Nota}

Let $\pa_k \ (k=1,\dots ,m)$ be the partial differential operator with respect to $x_k$. 
Lauricella's $ F_A (a,b,c;x)$ satisfies hypergeometric differential equations 
\begin{align*}
& \Bigl[
x_k (1-x_k )\pa _k ^{2} -x_k \sum _{\stackrel{1\leq i\leq m}{i\neq k}} x_i \pa _k \pa _i \\
& +(c_k -(a+b_k+1)x_k )\pa _k -b_k \sum _{\stackrel{1\leq i\leq m}{i\neq k}} x_{i} \pa _{i} -ab_k 
\Bigr] f(x)=0, 
\end{align*}
for $k=1,\ldots ,m$. 
The system generated by them is 
called Lauricella's system $E_A (a,b,c)$ of hypergeometric differential equations. 

\begin{Prop}[\cite{L}, \cite{Nakayama}] \label{solution}
The system $E_A (a,b,c)$ is a holonomic system of rank $2^m$ with the singular locus 
\begin{align}
\label{sing-locus}
S:= \left( 
\prod_{k=1}^m x_k \cdot 
\prod _{\{ i_1 ,\ldots ,i_r \} \subset \{ 1,\ldots ,m \}} \left( 1-\sum_{p=1}^r x_{i_p} \right) =0
\right) \subset \C^m . 
\end{align}
If $c_1 ,\ldots ,c_m \not\in \Z$, then 
the vector space of solutions to $E_A (a,b,c)$ in a simply connected domain in 
$D_A -S$ is spanned by the following $2^m$ elements: 
\begin{align}
\label{series-sol} 
f_{i_1 \cdots i_r} 
:=\left( \prod _{p=1} ^{r} x_{i_{p}} ^{1-c_{i_{p}}} \right) \cdot 
F_A \left( a+r-\sum _{p=1} ^{r} c_{i_p} ,b^{i_1 \cdots i_r} ,c^{i_1 \cdots i_r};x \right) .
\end{align}
Here $r$ runs from $0$ to $m$, indices $i_1 ,\ldots ,i_r$ satisfy $1\leq i_1 <\cdots <i_r \leq m$, 
and the row vectors $b^{i_1 \cdots i_r}$ and $c^{i_1 \cdots i_r}$ are defined by 
$$
b^{i_1 \cdots i_r} :=b+\sum_{p=1}^r (1-c_{i_p})e_{i_p} ,\quad
c^{i_1 \cdots i_r} :=c+2\sum_{p=1}^r (1-c_{i_p})e_{i_p} ,
$$
where $e_{i}$ is the $i$-th unit row vector of $\C^m$. 
\end{Prop}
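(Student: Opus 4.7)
The statement combines three separate assertions: that $E_A(a,b,c)$ is holonomic of rank $2^m$, that its singular locus equals $S$, and that the listed $2^m$ functions form a fundamental system. My plan is to treat the first two as classical facts (they can be read off from an explicit Gr\"obner-basis computation of $E_A(a,b,c)$ in the Weyl algebra and the principal symbols of the generators, as carried out in \cite{L} and \cite{Nakayama}), and to concentrate on constructing and distinguishing the $2^m$ series solutions.

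The heart of the proof is to establish the following symmetry of $E_A$: for each $k$, writing $f(x) = x_k^{1-c_k} g(x)$ and substituting into the $m$ generators of $E_A(a,b,c)$, the resulting system for $g$ is precisely $E_A(a+1-c_k,\ b+(1-c_k)e_k,\ c+2(1-c_k)e_k)$. This is the direct $m$-variable analogue of the Kummer-type substitution used for Gauss's ${}_2F_1$, and its verification reduces to a bookkeeping calculation based on
\begin{align*}
\partial_k(x_k^{1-c_k} g) &= x_k^{-c_k}\bigl((1-c_k)g + x_k \partial_k g\bigr), \\
\partial_i(x_k^{1-c_k} g) &= x_k^{1-c_k}\partial_i g \qquad (i \ne k),
\end{align*}
together with the observation that the substitutions associated to different indices commute. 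Iterating the substitution in the $r$ variables $x_{i_1}, \ldots, x_{i_r}$ produces exactly the shifted triple $(a + r - \sum_p c_{i_p},\ b^{i_1\cdots i_r},\ c^{i_1\cdots i_r})$ that appears in the statement. Since $F_A$ with these shifted parameters solves the shifted system $E_A$ by the very definition of $F_A$ as a formal power series annihilated by its defining operators, the function $f_{i_1\cdots i_r}$ solves $E_A(a,b,c)$.

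Linear independence of the $2^m$ solutions is then read off from their leading behavior at the origin. Because each relevant $F_A$-series has constant term $1$, the function $f_{i_1\cdots i_r}$ has leading monomial $\prod_{p=1}^r x_{i_p}^{1-c_{i_p}}$. Under the standing assumption $c_1, \ldots, c_m \notin \Z$, the $2^m$ exponent vectors $\sum_p (1-c_{i_p})e_{i_p}$ are pairwise distinct modulo $\Z^m$, so no nontrivial $\C$-linear combination of the $f_{i_1\cdots i_r}$ can vanish on a punctured neighborhood of a generic point of $\{x_1 \cdots x_m = 0\}$. Combined with the rank $2^m$ statement, the $f_{i_1\cdots i_r}$ then span the solution space.

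The main obstacle is the substitution lemma. Although conceptually the same as the Gauss case, one must check that the cross-terms $x_k \sum_{i\ne k} x_i \partial_k \partial_i$ and $b_k \sum_{i\ne k} x_i \partial_i$ in the $k$-th operator transform consistently with the leading $x_k(1-x_k)\partial_k^2$ term, so that after multiplying through by $x_k^{c_k-1}$ one recovers an operator of exactly the same shape with the shifted parameters, rather than an inhomogeneous or altered system. The remaining operators, those indexed by $k' \ne k$, are easier: they involve $\partial_k$ only through $-x_{k'} x_k \partial_{k'}\partial_k$ and $-b_{k'} x_k \partial_k$, both of which absorb the factor $x_k^{1-c_k}$ cleanly.
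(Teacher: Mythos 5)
Your proposal is correct, but there is nothing in the paper to compare it against line by line: the paper states this Proposition as a known result, citing Lauricella and Nakayama, and gives no proof at all. What you have done is reconstruct the classical argument, and the reconstruction is sound. In particular your key substitution lemma does check out: writing $f = x_k^{\lambda}g$ with $\lambda = 1-c_k$, in the $k$-th operator the coefficient of $\partial_k g$ becomes $(2\lambda + c_k) - (2\lambda + a + b_k + 1)x_k = (2-c_k) - (a' + b_k' + 1)x_k$, the singular terms in $x_k^{-1}$ cancel because $\lambda(\lambda - 1 + c_k) = 0$, and the constant term becomes $-(a+\lambda)(b_k+\lambda) = -a'b_k'$; for $k' \neq k$ the cross-terms contribute exactly $-\lambda x_{k'}\partial_{k'} - \lambda b_{k'}$, which is what is needed to replace $a$ by $a' = a+\lambda$ while leaving $b_{k'}, c_{k'}$ fixed, and iteration over $i_1,\ldots,i_r$ is consistent precisely because each substitution leaves the parameters attached to the other indices untouched. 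Two small points would tighten the write-up: (1) the shifted series $F_A(a+r-\sum_p c_{i_p}, b^{i_1\cdots i_r}, c^{i_1\cdots i_r};x)$ is well defined and convergent on $D_A$ because $c_k \notin \Z$ forces $2-c_k \notin \{0,-1,-2,\ldots\}$; (2) the final step from ``$2^m$ linearly independent solutions'' to ``spanning'' invokes the fact that the local solution space at a nonsingular point of a holonomic system has dimension equal to the holonomic rank (Cauchy--Kowalevski--Kashiwara), which is part of what you delegated to the references. With that stated, your division of labor — defer holonomicity, rank, and the singular locus to \cite{L} and \cite{Nakayama}, prove the fundamental-system assertion directly — is in effect the same division the paper makes implicitly by citation, except that you actually supply the argument the paper omits.
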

For the above $i_1 ,\ldots ,i_r$, we take $j_{1} ,\ldots ,j_{m-r}$ so that 
$1\leq j_{1} <\cdots <j_{m-r} \leq m$ and $\{ i_{1} ,\ldots ,i_{r} ,j_{1} ,\ldots ,j_{m-r} \} =\{1,\ldots ,m \}$. 
It is easy to see that the $i_p$-th entries of $b^{i_1 \cdots i_r}$ and $c^{i_1 \cdots i_r}$ are 
$b_{i_p}-c_{i_p}+1$ and $2-c_{i_p}$ 
($1\leq p \leq r$) and 
the $j_q$-th entries are $b_{j_q}$ and $c_{j_q}$ ($1 \leq q \leq m-r$), respectively. 

We denote the multi-index `` $i_1 \cdots i_r$'' by a letter $I$ 
expressing the set $\{ i_1,\ldots ,i_r \}$. 
Note that the solution (\ref{series-sol}) for $r=0$ is 
$f(=f_{\emptyset})=F_A (a,b,c ;x)$. 

\begin{Prop}[Integral representation of Euler type, \cite{L}] \label{int-Euler}
For sufficiently small positive real numbers $x_1 ,\ldots ,x_m$, 
if $\re (c_k) >\re (b_k) >0 \ (k=1,\ldots ,m)$, 
then $F_A (a,b,c;x)$ admits the following integral representation: 
\begin{align}
  F_A (a,b,c ;x) =&\prod \frac{\Ga(c_k)}{\Ga(b_k)\Ga(c_k-b_k)} \label{integral} \\
  & \cdot \int _{(0,1)^m} \prod \left( t_k ^{b_k -1} \cdot (1-t_k)^{c_k -b_k -1} \right) 
  \cdot \left( 1-\sum x_k t_k \right) ^{-a} dt_1 \wedge \cdots \wedge dt_m. 
  \nonumber
\end{align}
\end{Prop}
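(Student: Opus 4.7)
The plan is to establish the identity by expanding the kernel $(1-\sum x_k t_k)^{-a}$ on the right-hand side as a power series in the $x_k$'s, interchanging sum and integral, and evaluating each resulting factor as a Beta integral. Since $x_1,\ldots,x_m$ are taken sufficiently small positive reals and $t_k \in (0,1)$, we have $|\sum x_k t_k|<1$, so the generalized binomial / multinomial theorem gives
\begin{align*}
\left(1-\sum x_k t_k\right)^{-a}
= \sum_{n_1,\ldots,n_m=0}^{\infty}
\frac{(a,\sum n_k)}{\prod n_k !}\,\prod (x_k t_k)^{n_k},
\end{align*}
with uniform absolute convergence on $[0,1]^m$ for $x$ in a small enough polydisk.

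Next I would substitute this expansion into the integrand on the right-hand side of (\ref{integral}) and, justified by the uniform convergence above together with $\re(b_k)>0$ and $\re(c_k-b_k)>0$, swap the sum and the integral. The integrand then factorizes across the $k$-variables, so the integral splits as a product of one-variable Beta integrals
\begin{align*}
\int_0^1 t_k^{b_k+n_k-1}(1-t_k)^{c_k-b_k-1}\,dt_k
=\frac{\Ga(b_k+n_k)\Ga(c_k-b_k)}{\Ga(c_k+n_k)},
\end{align*}
valid precisely under the hypothesis $\re(c_k)>\re(b_k)>0$.

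Finally I would multiply through by the prefactor $\prod \Ga(c_k)/(\Ga(b_k)\Ga(c_k-b_k))$, which cancels the $\Ga(c_k-b_k)$ factor and regroups the Gamma quotients as Pochhammer symbols via $(b_k,n_k)=\Ga(b_k+n_k)/\Ga(b_k)$ and $(c_k,n_k)=\Ga(c_k+n_k)/\Ga(c_k)$. The result is exactly the series
\begin{align*}
\sum_{n_1,\ldots,n_m=0}^{\infty}
\frac{(a,\sum n_k)\prod(b_k,n_k)}{\prod(c_k,n_k)\cdot \prod n_k!}\,\prod x_k^{n_k}
= F_A(a,b,c;x),
\end{align*}
which proves (\ref{integral}). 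The only genuinely delicate point is the interchange of summation and integration; this is the main thing to check carefully, but it is routine since the double series $\sum_n \frac{|(a,\sum n_k)|}{\prod n_k!}\prod|x_k|^{n_k}\int_0^1 t_k^{\re(b_k)+n_k-1}(1-t_k)^{\re(c_k-b_k)-1}dt_k$ converges whenever $\sum |x_k|<1$, by the very definition of $D_A$ and the convergence domain of the $F_A$ series. No deeper obstacle appears.
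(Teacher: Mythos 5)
Your proof is correct. Note that the paper does not actually prove Proposition \ref{int-Euler}: it is quoted as a classical result of Lauricella \cite{L}, so there is no internal proof to compare against. Your derivation --- multinomial expansion of $\left(1-\sum x_k t_k\right)^{-a}$, interchange of sum and integral (legitimately justified by absolute convergence of the double series together with the integrability supplied by $\re (c_k)>\re (b_k)>0$), evaluation by Beta integrals, and regrouping of Gamma quotients into Pochhammer symbols --- is the standard classical argument, and it is essentially the same expand--swap--evaluate technique the paper itself uses in its proof of Proposition \ref{series-cycle}, where the analogous series expansions and Gamma-factor bookkeeping are carried out over the twisted cycle $\tilde{\De}_{i_1 \cdots i_r}$ instead of over $(0,1)^m$.
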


\section{Twisted homology groups}\label{section-THG}
We review twisted homology groups and the intersection form 
between twisted homology groups in general situations, 
by referring to Chapter 2 of \cite{AK} and 
Chapters IV, VIII of \cite{KY}. 

For polynomials $P_j(t)=P_j(t_1 ,\ldots ,t_m ) \ (1 \leq j \leq n)$, 
we set $D_j :=\{ t \mid  P_j(t)=0 \} \subset \C^m$ and $M:=\C^m -(D_1 \cup \cdots \cup D_n)$.
We consider a multi-valued function $u(t)$ on $M$ defined as 
$$
u(t):=\prod _{j=1}^{n} P_j(t)^{\la _j} ,\ \la_j \in \C -\Z \ (1 \leq j \leq n).
$$
For a $k$-simplex $\si$ in $M$, we define a loaded $k$-simplex $\si \otimes u$ by
$\si$ loading a branch of $u$ on it. 
We denote the $\C$-vector space of finite sums of loaded $k$-simplexes by $\CC_k (M,u)$, 
called the $k$-th twisted chain group. 
An element of $\CC_k(M,u)$ is called a twisted $k$-chain. 
For a loaded $k$-simplex $\si \otimes u$ and a smooth $k$-form $\vph$ on $M$, 
the integral $\int_{\si \otimes u} u \cdot \vph$ is defined by 
$$
\int_{\si \otimes u} u \cdot \vph :=
\int_{\si} \left[ {\rm the \ fixed \ branch} \ {\rm of} \ u \ {\rm on} \ \si \right] \cdot \vph .
$$
By the linear extension of this, we define the integral on a twisted $k$-chain.  

We define the boundary operator $\pa^u :\CC_k (M,u) \to \CC_{k-1} (M,u)$ by
$$
\pa^u (\si \otimes u):= \pa(\si) \otimes u|_{\pa(\si)} , 
$$
where $\pa$ is the usual boundary operator and 
$u|_{\pa(\si)}$ is the restriction of $u$ to $\pa (\si)$. 
It is easy to see that $\pa^u \circ \pa^u =0$. 
Thus we have a complex 
$$
\CC_{\bu} (M,u) :\cdots 
\overset{\pa^u}{\longrightarrow} \CC_k (M,u)
\overset{\pa^u}{\longrightarrow} \CC_{k-1}(M,u)
\overset{\pa^u}{\longrightarrow} \cdots ,
$$
and its $k$-th homology group $H_k(\CC_{\bu} (M,u))$. 
It is called the $k$-th twisted homology group. 
An element of $\ker \pa^u$ is called a twisted cycle.

By considering $u^{-1} =1/u$ instead of $u$, we have $H_k(\CC_{\bu} (M,u^{-1}))$. 
There is the intersection pairing $I_h$ between $H_m(\CC_{\bu} (M,u))$ and $H_m(\CC_{\bu} (M,u^{-1}))$
(in fact, the intersection pairing is defined between $H_k(\CC_{\bu} (M,u))$ 
and $H_{2m-k}(\CC_{\bu} (M,u^{-1}))$, however we do not consider the cases $k \neq m$). 
Let $\De$ and $\De'$ be elements of $H_m(\CC_{\bu} (M,u))$ and $H_m(\CC_{\bu} (M,u^{-1}))$ given by 
twisted cycles $\sum_i \al_i \cdot \si_i \otimes u_i$ and $\sum_j \al'_j \cdot \si'_j \otimes u_j ^{-1}$ 
respectively,  
where $u_i$ (resp. $u_j ^{-1}$) is a branch of $u$ (resp. $u^{-1}$) on $\si_i$ (resp. $\si'_j$). 
Then their intersection number is defined by 
$$
I_h (\De ,\De'):=\sum_{i,j} \sum_{s \in \si_i \cap \si'_j} \al_i \al'_j 
\cdot (\si_i \cdot \si'_j)_s \cdot \frac{u_i (s)}{u_j(s)} ,
$$
where $(\si_i \cdot \si'_j)_s$ is the topological intersection number of 
$m$-simplexes $\si_i$ and $\si'_j$ at $s$. 

~

In this paper, we mainly consider 
$$
M:=\C ^{m} -\left( 
  \bigcup_k (t_k =0) \cup 
  \bigcup_k (1-t_k =0) \cup (v=0) \right) ,
$$
where $v:=1-\sum x_k t_k$. 
We consider the twisted homology group on $M$ with respect to the multi-valued function 
\begin{align*}
  u := \prod _{k=1} ^{m} t_{k} ^{b_k} (1-t_k)^{c_k -b_k -1}  \cdot v ^{-a} .
\end{align*}
Let $\De$ be the regularization of $(0,1)^m \otimes u$, 
which gives an element in $H_m (\CC_{\bu} (M,u))$. 
For the construction of regularizations, refer to 
Sections 3.2.4 and 3.2.5 of \cite{AK}. 
Proposition \ref{int-Euler} means that the integral 
$$
\int_{\De} u \vph ,\ \ 
\vph  :=\frac{dt_1 \wedge \cdots \wedge dt_m}{t_1 \cdots t_m }
$$
represents $F_A (a,b,c;x)$ modulo Gamma factors.

\section{Twisted cycles corresponding to local solutions 
$f_{i_1 \cdots i_r}$}\label{section-cycle}
In this section, we construct $2^m$ twisted cycles in $M$ corresponding to 
the solutions (\ref{series-sol}) to $E_A (a,b,c)$.

Let $0 \leq r \leq m$ and subsets 
$\{ i_1 ,\ldots ,i_r \}$ and $\{ j_1 ,\ldots ,j_{m-r} \}$ 
of $\{ 1,\ldots ,m \}$ satisfy 
$i_1 <\cdots <i_r ,\ j_{1} <\cdots <j_{m-r}$ and 
$\{ i_{1} ,\ldots ,i_{r} ,j_{1} ,\ldots ,j_{m-r} \} =\{1,\ldots ,m \}$. 
\begin{Nota}
From now on, the letter $p$ (resp. $q$) is always stands for an index running 
from $1$ to $r$ (resp. from $1$ to $m-r$). 
We use the abbreviations $\sum ,\ \prod$ for the indices 
$p,q$ as are mentioned in Notation \ref{k}. 
\end{Nota}

We set 
\begin{align*}
  & M _{i_1 \cdots i_r} \\
  & :=\C ^m -\left( \bigcup _k (s_k =0) 
    \cup \bigcup_p (s_{i_p} -x_{i_p} =0) \cup \bigcup_q (1-s_{j_q} =0)
    \cup (v_{i_1 \cdots i_r} =0)  \right) ,
\end{align*}
where 
\begin{align*}
  v_{i_1 \cdots i_r} :=1-\sum_p s_{i_p} -\sum_q x_{j_q} s_{j_q} .
\end{align*}
Let $u_{i_1 \cdots i_r}$ and $\vph_{i_1 \cdots i_r}$ be 
a multi-valued function and an $m$-form on $M_{i_1 \cdots i_r}$ defined as 
\begin{align*}
  & u_{i_1 \cdots i_r} :=
  \prod _{p=1} ^{r} s_{i_p} ^{b_{i_p}} \left( s_{i_p}-x_{i_p} \right) ^{c_{i_p}-b_{i_p}-1} 
  \cdot \prod _{q=1} ^{m-r} s_{j_q} ^{b_{j_q}} (1-s_{j_q})^{c_{j_q}-b_{j_q}-1}  
  \cdot v_{i_1 \cdots i_r} ^{-a}, \\
  & \vph _{i_1 \cdots i_r} :=\frac{ds_1 \wedge \cdots \wedge ds_m}{s_1 \cdots s_m} . 
\end{align*}
We construct a twisted cycle $\tilde{\De }_{i_1 \cdots i_r}$ in $M_{i_1 \cdots i_r}$ 
with respect to $u_{i_1 \cdots i_r}$. 
Note that if $\{ i_1 ,\ldots ,i_r \} =\emptyset$, then these settings coincide  
with those in the end of Section \ref{section-THG}. 
We choose positive real numbers $\vep _1 ,\ldots ,\vep _m$ and $\vep$ so that 
$\vep <1-\sum_{k} \vep _k$ and $\vep_k <\frac{1}{4}$. 
And let $x_1 ,\ldots ,x_m$ be small positive real numbers satisfying 
$$
x_k < \vep_k ,\quad \sum _k x_k (1+\vep_k) <\vep 
$$
(for example, if 
$$
\vep _k =\vep =\frac{1}{5m} ,\ 0<x_{k} <\frac{1}{6m^2} ,
$$
these conditions hold). 
Thus the closed subset 
\begin{align*}
  \sigma_{i_1 \cdots i_r} :=\left\{ (s_1 ,\ldots ,s_m )\in \R^m \ \Bigg| \ 
    \begin{array}{l}
      s_{i_p} \geq \vep_{i_p} ,\ 1-\sum s_{i_p} \geq \vep ,\\
      s_{j_q} \geq \vep_{j_q} ,\ 1-s_{j_q} \geq \vep_{j_q}
    \end{array}
  \right\} 
\end{align*}
is nonempty, since we have
$(\vep_1 \! +\! \frac{\de}{2m},\ldots ,\vep_m \! +\! \frac{\de}{2m}) \in \sigma_{i_1 \cdots i_r}$, 
where $\de :=1  - \sum \vep_k - \vep >0$. 
Further, $\sigma_{i_1 \cdots i_r}$ is contained in the bounded domain 
\begin{align*}
  \left\{ (s_1 ,\ldots ,s_m )\in \R^m \left|
      \begin{array}{l}
        s_{i_p} -x_{i_p} >0, \\
        0<s_{j_q} <1,
      \end{array} \right.
    1-\sum s_{i_p} -\sum x_{j_q} s_{j_q} >0 \right\} 
  \subset (0,1)^m.
\end{align*}
and is a direct product of an $r$-simplex and $(m-r)$ intervals. 
Indeed, $(s_1 ,\ldots ,s_m ) \in \sigma _{i_1 \cdots i_r} $ satisfies 
\begin{align*}
& s_{i_p} -x_{i_p} > s_{i_p} -\vep_{i_p} >0, \\
& 1-\sum s_{i_p} -\sum x_{j_q} s_{j_q} > \vep -\sum x_{j_q} >\vep -\sum x_k >  0 .
\end{align*}
The orientation of $\sigma _{i_1 \cdots i_r} $ 
is induced from the natural embedding $\R^m \subset \C^m$. 
We construct a twisted cycle from $\sigma _{i_1 \cdots i_r}$. 
We may assume that $\vep _k =\vep$ (the above example satisfies this condition), and denote them by $\vep$. 
Set $L_1 :=(s_1 =0),\ldots ,\ L_m :=(s_m =0) ,\ 
L_{m+1} :=(1-s_{j_1}=0),\ldots ,\ L_{2m-r} :=(1-s_{j_{m-r}}),\ L_{2m-r+1} :=(1-\sum s_{i_p} =0)$, 
and let $U(\subset \R ^m )$ be a bounded chamber surrounded by $L_1 ,\ldots ,\ L_{2m-r+1}$, 
then $\sigma _{i_1 \cdots i_r}$ is contained in $U$. 
Note that we do not consider the hyperplane $L_{2m-r+1}$ 
(resp. the hyperplanes $L_{m+1} ,\ldots ,\ L_{2m-r}$), when $r=0$ (resp. $r=m$). 
For $J\subset \{ 1\ldots ,2m-r+1 \}$, we consider $L_J :=\cap _{j\in J} L_j ,\ U_J :=\overline{U} \cap L_J$ 
and $T_J :=\vep $-neighborhood of $U_J$. 
Then we have 
$$
\sigma _{i_1 \cdots i_r} =U-\bigcup _J T_J .
$$ 
Using these neighborhoods $T_J$, we can construct a twisted cycle $\tilde{\De} _{i_{1} \cdots i_{r}}$ 
in the same manner as Section 3.2.4 of \cite{AK} 
(notations $L$ and $U$ correspond to $H$ and $\De$ in \cite{AK}, respectively). 
Note that we have to consider contribution of branches of 
$\DS s_{i_p} ^{b_{i_p}} \left( s_{i_p}-x_{i_p} \right) ^{c_{i_p}-b_{i_p}-1}$, 
when we deal with the circle associated to $L_{i_p} \ (p=1,\dots ,r)$,  
because of $x_{i_p} <\vep$. 
Thus the exponent about this contribution is 
$$
b_{i_p} +(c_{i_p}-b_{i_p}-1)=c_{i_p} -1.
$$
The exponents about the contributions 
of the circles associated to $L_{j_q} ,\ L_{m+q} ,\ L_{2m-r+1}$ are simply 
$$
b_{j_q}  ,\ c_{j_q} -b_{j_q}-1,\ -a, 
$$ 
respectively. 
We briefly explain the expression of $\tilde{\De}_{i_1 \cdots i_r}$. 
For $j=1,\ldots ,2m-r+1$, let $l_j$ be the $(m-1)$-face of $\si_{i_1 \cdots i_r}$ 
given by $\si_{i_1 \cdots i_r} \cap \overline{T_j}$, 
and let $S_j$ be a positively oriented circle with radius $\vep$ 
in the orthogonal complement of $L_j$ starting from the projection of 
$l_j$ to this space and surrounding $L_j$. 
Then $\tilde{\De} _{i_{1} \cdots i_{r}}$ is written as
\begin{align*}
  \sigma _{i_{1} \cdots i_{r}} 
  +\sum _{ \emptyset \neq J \subset \{ 1,\ldots , 2m-r+1\}} 
  \left( \prod_{j\in J} \frac{1}{d_j} \right) 
  \cdot \left( \Biggl( \bigcap_{j\in J} l_j \Biggr) \times \prod_{j\in J} S_j \right) ,
\end{align*}
where 
\begin{align*}
  d_{i_p} :=\ga_{i_p} -1,\ d_{j_q} :=\be_{j_q} -1 ,\ 
  d_{m+q} :=\ga_{j_q} \be_{j_q}^{-1} -1,\ d_{2m-r+1} :=\al^{-1} -1,
\end{align*}
and $\al :=e^{2\pi \sqrt{-1}a} ,\ \be_k :=e^{2\pi \sqrt{-1}b_k},\ 
\ga_k :=e^{2\pi \sqrt{-1}c_k}$. 
Note that we define an appropriate orientation for each 
$(\cap_{j\in J} l_j)\times \prod_{j\in J} S_j$, 
see Section 3.2.4 of \cite{AK} for details.

\begin{Ex}
  We give explicit forms of $\tilde{\De},\ \tilde{\De}_1$ and $\tilde{\De}_{12}$, for $m=2$. 
  \begin{enumerate}[(i)]
  \item In the case of $I=\emptyset$, $\tilde{\De}$ is the usual regularization 
    of $(0,1)^m$. 
  \item In the case of $I=\{ 1 \}$, we have 
    \begin{align*}
      \tilde{\De}_1 =&\si_1 +\frac{S_1 \times l_1 }{1-\ga_1} 
      +\frac{S_2 \times l_2}{1-\be_2}  
      +\frac{S_4 \times l_4}{1-\al^{-1}} 
      +\frac{S_3 \times l_3}{1-\ga_2 \be_2^{-1}} \\
      &+\frac{S_1 \times S_2}{(1-\ga_1)(1-\be_2)} 
      +\frac{S_2 \times S_4}{(1-\be_2)(1-\al^{-1})} \\   
      &+\frac{S_4 \times S_3}{(1-\al^{-1})(1-\ga_2 \be_2^{-1})}
      +\frac{S_3 \times S_1}{(1-\ga_2 \be_2^{-1})(1-\ga_1)} , 
    \end{align*}
    where the $1$-chains $l_j$ satisfy $\pa \si =\sum_{j=1}^4 l_j$ 
    (see Figure \ref{fig1}), 
    and the orientation of each direct product is induced 
    from those of its components. 
    \begin{figure}[h]
      \centering{
        \includegraphics[scale=0.8]{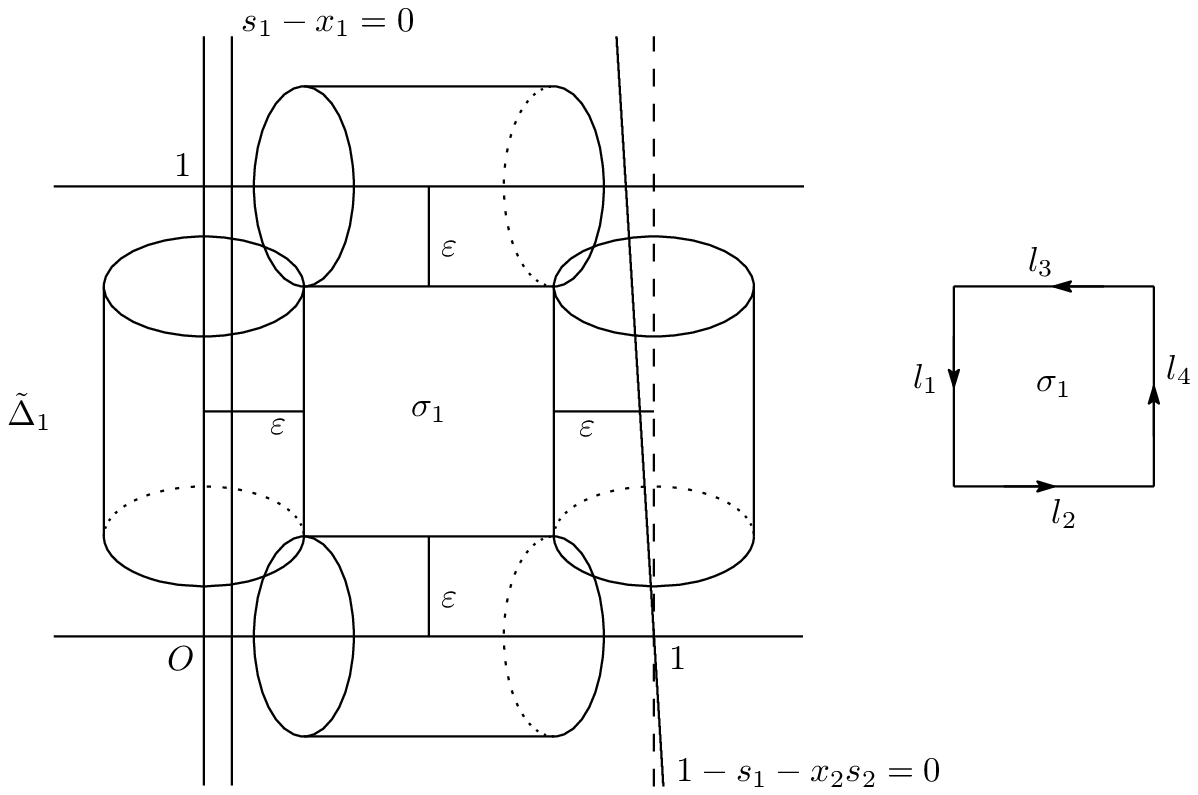} 
      }
      \caption{$\tilde{\De}_1$ for $m=2$. \label{fig1}}
    \end{figure}
  \item In the case of $I=\{ 1,2 \}$, we have 
    \begin{align*}
      \tilde{\De}_{12} =&\si_{12} +\frac{S_1 \times l_1 }{1-\ga_1} 
      +\frac{S_2 \times l_2}{1-\ga_2}  
      +\frac{S_3 \times l_3}{1-\al^{-1}}  \\
      &+\frac{S_1 \times S_2}{(1-\ga_1)(1-\ga_2)} 
      +\frac{S_2 \times S_3}{(1-\ga_2)(1-\al^{-1})}   
      +\frac{S_3 \times S_1}{(1-\al^{-1})(1-\ga_1)} , 
    \end{align*}
    where the $1$-chains $l_j$ satisfy $\pa \si =l_1 +l_2 +l_3$ 
    (see Figure \ref{fig2}), 
    and the orientation of each direct product is induced 
    from those of its components. 
    \begin{figure}[h]
      \centering{
        \includegraphics[scale=0.8]{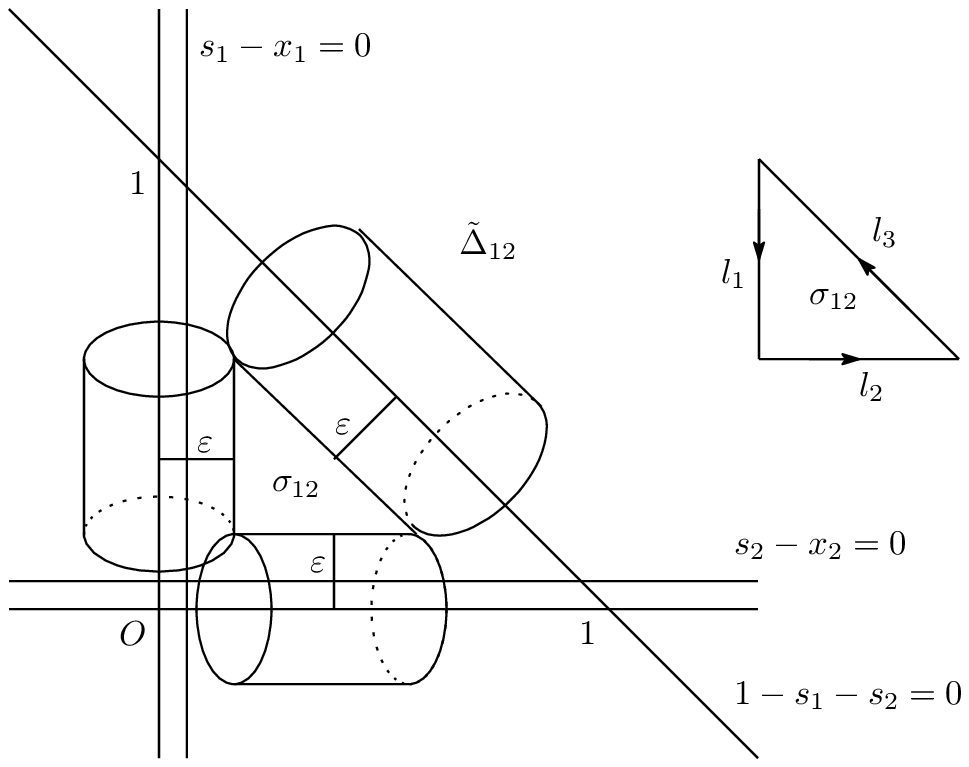} 
      }
      \caption{$\tilde{\De}_{12}$ for $m=2$. \label{fig2}}
    \end{figure}
  \end{enumerate}
\end{Ex}

We consider the following integrals: 
\begin{align*}
& F _{i_1 \cdots i_r} := 
\int _{\tilde{\De }_{i_1 \cdots i_r}} u_{i_1 \cdots i_r} \vph_{i_1 \cdots i_r} \\ 
&= \int _{\tilde{\De }_{i_1 \cdots i_r}}
\prod _{p=1} ^{r} s_{i_p} ^{c_{i_p} -2} \left( 1-\frac{x_{i_p}}{s_{i_p}} \right) ^{c_{i_p}-b_{i_p}-1} 
\cdot \prod _{q=1} ^{m-r} s_{j_q} ^{b_{j_q}-1} (1-s_{j_q})^{c_{j_q}-b_{j_q}-1} \\
& \hspace{20mm} \cdot \left( 1-\sum _{p=1} ^{r} s_{i_p} -\sum _{q=1} ^{m-r} x_{j_q} s_{j_q}  \right) ^{-a} 
ds_1 \wedge \cdots \wedge ds_m .
\end{align*}

\begin{Prop}\label{series-cycle}
\begin{align*}
F _{i_1 \cdots i_r} 
=&\prod _{p=1} ^{r} \Ga (c_{i_p} -1) 
\cdot \prod _{q=1} ^{m-r} \frac{\Ga(b_{j_q}) \Ga(c_{j_q} -b_{j_q})}{\Ga(c_{j_q})} 
\cdot \frac{\Ga (1-a)}{\Ga (\sum c_{i_p} -a-r+1)} \\
& \cdot F_A (a+r-\sum _{p=1} ^{r} c_{i_p} ,b^{i_1 \cdots i_r},c^{i_1 \cdots i_r} ;x).
\end{align*}
\end{Prop}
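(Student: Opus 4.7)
The plan is to expand the integrand as a multivariate power series in $x_1,\ldots,x_m$, exchange the sum with the integral, and recognize the resulting term-wise integrals as products of Dirichlet and Beta integrals over the factored regions that make up $\tilde{\De}_{i_1\cdots i_r}$. First I would apply two binomial expansions. For each $p$ use
\begin{align*}
\left(1-\frac{x_{i_p}}{s_{i_p}}\right)^{c_{i_p}-b_{i_p}-1}
=\sum_{n_{i_p}\geq 0}\frac{(b_{i_p}-c_{i_p}+1,n_{i_p})}{n_{i_p}!}\left(\frac{x_{i_p}}{s_{i_p}}\right)^{n_{i_p}},
\end{align*}
and factor $v_{i_1\cdots i_r}=(1-\sum_p s_{i_p})-\sum_q x_{j_q}s_{j_q}$ so that
\begin{align*}
v_{i_1\cdots i_r}^{-a}=\sum_{n_{j_1},\ldots,n_{j_{m-r}}\geq 0}\frac{(a,\sum_q n_{j_q})}{\prod_q n_{j_q}!}\prod_q (x_{j_q}s_{j_q})^{n_{j_q}}\cdot\Bigl(1-\sum_p s_{i_p}\Bigr)^{-a-\sum_q n_{j_q}}.
\end{align*}
Since the $x_k$ are small, these series converge uniformly on $\tilde{\De}_{i_1\cdots i_r}$, justifying termwise integration.

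Next I would exploit the product structure of $\tilde{\De}_{i_1\cdots i_r}$ described in Section \ref{section-cycle}, which decouples the $s_{j_q}$-directions (each a regularized interval) from the $s_{i_p}$-directions (a regularized $r$-simplex). The $s_{j_q}$-integral in the $n$-th term is the classical Beta integral giving $\Ga(b_{j_q}+n_{j_q})\Ga(c_{j_q}-b_{j_q})/\Ga(c_{j_q}+n_{j_q})$, while the $s_{i_p}$-integral is the Dirichlet integral
\begin{align*}
\int\prod_p s_{i_p}^{c_{i_p}-2-n_{i_p}}\Bigl(1-\sum_p s_{i_p}\Bigr)^{-a-\sum_q n_{j_q}}\prod_p ds_{i_p}
=\frac{\prod_p \Ga(c_{i_p}-1-n_{i_p})\cdot \Ga(1-a-\sum_q n_{j_q})}{\Ga\bigl(\sum_p c_{i_p}-r+1-a-\sum_k n_k\bigr)}.
\end{align*}
The key point is that these formulas remain valid on the regularized cycle by the general principle, reviewed in Chapter 3 of \cite{AK}, that the regularization produces the meromorphic analytic continuation of the naive divergent integral with exactly the same Gamma expression.

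Finally, I would use the Gamma-function shift relations
\begin{align*}
\Ga(c_{i_p}-1-n_{i_p})=\frac{(-1)^{n_{i_p}}\Ga(c_{i_p}-1)}{(2-c_{i_p},n_{i_p})},\quad
\Ga(1-a-N)=\frac{(-1)^N\Ga(1-a)}{(a,N)},
\end{align*}
together with the analogous rewriting of $1/\Ga(\sum_p c_{i_p}-r+1-a-\sum_k n_k)$ in terms of $(a+r-\sum_p c_{i_p},\sum_k n_k)/\Ga(\sum_p c_{i_p}-r+1-a)$, to convert every Gamma factor into Pochhammer symbols. The $(a,N)$ in the numerator of the expansion then cancels with its reciprocal coming from $\Ga(1-a-N)$, and the remaining $(-1)$ signs square off. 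Collecting the surviving terms, one recognizes exactly the series $F_A(a+r-\sum_p c_{i_p},b^{i_1\cdots i_r},c^{i_1\cdots i_r};x)$ multiplied by the claimed prefactor.

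The main obstacle is the rigorous justification of step three: that the Dirichlet-type integral, evaluated on the regularized $r$-simplex factor of $\tilde{\De}_{i_1\cdots i_r}$, returns the classical Gamma-quotient despite $\re(-a-N)$ and $\re(c_{i_p}-1-n_{i_p})$ not being in the convergence range. One handles this by noting that the concrete construction of $\tilde{\De}_{i_1\cdots i_r}$ in Section \ref{section-cycle} restricts on the $s_{i_p}$-faces to precisely the standard regularization of the $r$-simplex used to define the multivariate Beta integral, so the Dirichlet formula holds as an identity of meromorphic functions of the parameters. The remaining calculation is bookkeeping with Pochhammer symbols.
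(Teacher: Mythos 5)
Your proposal is correct and follows essentially the same route as the paper's proof: the same two binomial expansions, the same uniform-convergence justification for termwise integration, and the same identification of the termwise integrals with the Gamma-quotient of the regularized simplex-times-intervals domain. The only cosmetic difference is that you simplify via Pochhammer shift identities where the paper compares the coefficients $A_{n_1\cdots n_m}$ and $B_{n_1\cdots n_m}$ and applies the reflection formula $\Ga(z)\Ga(1-z)=\pi/\sin(\pi z)$, which amounts to the same computation.
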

\begin{proof}
We compare the power series expansions of the both sides. 
Note that the coefficient of $x_1 ^{n_1} \cdots x_m ^{n_m}$ in the series expression of 
$F_A (a+r-\sum _{p=1} ^{r} c_{i_p} ,b^{i_1 \cdots i_r},c^{i_1 \cdots i_r} ;x)$ is 
\begin{align*}
A_{n_1 \dots n_m} :=& 
\frac{\Ga (a+r-\sum_p c_{i_p} +\sum_k n_k )}{\Ga (a+r-\sum_p c_{i_p} )}
\cdot \prod_p \frac{\Ga (b_{i_p}+1-c_{i_p} +n_{i_p} )}{\Ga (b_{i_p}+1-c_{i_p})} 
\cdot \prod_q \frac{\Ga (b_{j_q}+n_{j_q} )}{\Ga (b_{j_q})} \\
& \cdot \prod_p \frac{\Ga (2-c_{i_p})}{\Ga (2-c_{i_p} +n_{i_p})} 
\cdot \prod_q \frac{\Ga (c_{j_q})}{\Ga (c_{j_q} +n_{j_q})} 
\cdot \prod_k \frac{1}{n_k !}. 
\end{align*}
On the other hand, we have
\begin{align*}
  \left( 1-\frac{x_{i_p}}{s_{i_p}} \right) ^{c_{i_p} -b_{i_p} -1} 
  =\sum _{n_{i_p}} \frac{\Ga (b_{i_p }-c_{i_p} +1+ n_{i_p} )}
  {\Ga (b_{i_p }-c_{i_p} +1) \cdot n_{i_p} !} 
  s_{i_p} ^{-n_{i_p}}  x_{i_p} ^{n_{i_p}}
\end{align*}
and
\begin{align*}
  & \left(  1-\sum _{p=1} ^{r}  s_{i_{p}} -\sum _{q=1} ^{m-r} x_{j_{q}} s_{j_{q}}  \right) ^{-a} \\
  &= \sum _{n_{j_1} ,\dots ,n_{j_{m-r}}} \frac{\Ga (a+\sum n_{j_q} )}{\Ga (a) \cdot \prod n_{j_q} !} 
  (1-\sum s_{i_p})^{-a-\sum n_{j_q}} \cdot \prod s_{j_q}^{n_{j_q}}  x_{j_q} ^{n_{j_q}}. 
\end{align*}
When $r=0$ (resp. $r=m$), we do not need the first (resp. second) expansion. 
The convergences of these power series expansions are verified as follows. 
By the construction of $\tilde{\De }_{i_{1} \cdots i_{r}} $, we have 
$$
0<x_k <\vep_k ,\ \vep_{i_p} \leq |s_{i_p}| ,\  
|s_{j_q}| \leq 1+\vep_{j_q} ,\ \big|1-\sum s_{i_p} \big| \geq \vep .
$$
Thus the uniform convergences on $\tilde{\De }_{i_{1} \cdots i_{r}} $ follow from 
\begin{align*}
  & \left| \frac{x_{i_p}}{s_{i_{p}}}  \right| <\frac{\vep_{i_p}}{\vep_{i_p}} =1 , \\
  & \left| \frac{1}{1-\sum s_{i_p}} \cdot \sum x_{j_q} s_{j_q} \right| 
  \leq \frac{1}{|1-\sum s_{i_p}|} \cdot  \sum x_{j_q} |s_{j_q}| 
  \leq \frac{1}{\vep} \cdot \sum x_{j_q} (1+ \vep_{j_q}) < \frac{\vep}{\vep} =1.
\end{align*}
Since $\tilde{\De}_{i_1 \cdots i_r}$ is constructed as a finite sum of 
loaded (compact) simplexes, 
we can exchange the sum and the integral in the expression of $F_{i_1 \cdots i_r}$. 
Then the coefficient of $x_1 ^{n_1} \cdots x_m ^{n_m}$ in the series expansion of $F _{i_{1} \cdots i_{r}}$ is
\begin{align}
  \label{Bn}
  & B_{n_1 \dots n_m}:= 
  \prod_p \frac{\Ga (b_{i_p }-c_{i_p} +1+ n_{i_p} )}{\Ga (b_{i_p }-c_{i_p} +1)} 
  \cdot \frac{\Ga (a+\sum n_{j_q} )}{\Ga (a) } \cdot \prod_k \frac{1}{n_k !} \\
  & \cdot \int _{\tilde{\De }_{i_1 \cdots i_r}} \prod_p s_{i_p} ^{c_{i_p} -2 -n_{i_p}} 
  \cdot (1-\sum s_{i_p})^{-a-\sum n_{j_q}} 
  \cdot \prod_q s_{j_q} ^{b_{j_q}-1 -n_{j_q}} (1-s_{j_q}) ^{c_{j_q} -b_{j_q} -1} ds .
  \nonumber
\end{align}
By the construction, the twisted cycle $\tilde{\De }_{i_1 \cdots i_r}$ of this integral is 
identified with the usual regularization of the domain 
\begin{align*}
  \left\{ (s_1 ,\ldots ,s_m )\in \R^m \ \mid  
    s_{i_p} >0 ,\ 1-\sum s_{i_p} >0 ,\ 
    0<s_{j_q} <1 
  \right\} 
\end{align*}
for the multi-valued function 
$$
\prod_p s_{i_p} ^{c_{i_p} -1 -n_{i_p}} (1-\sum s_{i_p})^{-a-\sum n_{j_q}} 
\cdot \prod_q s_{j_q} ^{b_{j_q} -n_{j_q}} (1-s_{j_q}) ^{c_{j_q} -b_{j_q} -1}
$$
on $\C^m -\left( \bigcup_k (s_k=0) \cup \bigcup_q (1-s_{j_q} =0) \cup (1- \sum s_{i_p} =0) \right)$. 
Hence the integral in (\ref{Bn}) is equal to
\begin{align*}
\frac{\prod_p \Ga (c_{i_p}-n_{i_p}-1) \cdot \Ga (-a-\sum n_{j_q}+1)}{\Ga (\sum c_{i_p} -a-\sum n_k -r+1)} 
\cdot \prod_q \frac{\Ga (b_{j_q}+n_{j_q}) \Ga (c_{j_q} -b_{j_q})}{\Ga (c_{j_q} +n_{j_q})} .
\end{align*}
Using the formula 
\begin{align}
  \Ga (z) \Ga (1-z) =\frac{\pi}{\sin (\pi z)} , \label{Gamma}
\end{align}
we thus have 
\begin{align*}
  \frac{ B_{n_1 \dots n_m}}{A_{n_1 \dots n_m}} =
  \prod_p \Ga (c_{i_p} -1) 
  \cdot \prod_q \frac{\Ga (b_{j_q}) \Ga(c_{j_q} -b_{j_q})}{\Ga (c_{j_q}) } 
  \cdot \frac{\Ga(1-a)}{\Ga(\sum c_{i_p}-a-r+1)} ,
\end{align*}
which implies the proposition. 
\end{proof}

We define a bijection 
$\iota _{i_1 \cdots i_r} :M_{i_1 \cdots i_r} \rightarrow M$ by
$$
\iota _{i_1 \cdots i_r} (s_1 ,\ldots ,s_m ):=(t_1 ,\ldots ,t_m );\ 
t_{i_p} =\frac{s_{i_p}}{x_{i_p}} ,\ t_{j_q} =s_{j_q} .
$$
For example, $\iota (=\iota _{\emptyset })$ is the identity map on $M=M_{\emptyset}$. 

We also define branches of the multi-valued function $u$ 
on real bounded chambers in $M$. On the domain 
\begin{eqnarray*}
D_{i_1 \cdots i_r} :=\{ (t_1 ,\ldots ,t_r) \in \R^m \mid 
t_k >0,\ 1-\sum x_k t_k >0,\ 1-t_{i_p} <0 ,\ 1-t_{j_q} >0 \} ,
\end{eqnarray*}
the arguments of $t_k ,\ 1-\sum x_k t_k ,\ 1-t_{i_p}$ and $1-t_{j_q}$ are given as follows. 
\begin{eqnarray*}
  \begin{array}{|c|c|c|c|}
    \hline
    t_k & 1-\sum x_k t_k & 1-t_{i_p} & 1-t_{j_q} \\ \hline
    0 & 0 & -\pi & 0 \\ \hline
  \end{array}
\end{eqnarray*}
We state our first main theorem.

\begin{Th}\label{solutions-cycle}
We define a twisted cycle $\De _{i_1 \cdots i_r}$ in $M$ by 
\begin{align*}
  \De _{i_{1} \cdots i_{r}} :=(\iota _{i_1 \cdots i_r} )_{*} (\tilde{\De }_{i_1 \cdots i_r}) .
\end{align*}
Then we have
\begin{align*}
  & \int _{\De _{i_1 \cdots i_r} } \prod 
  \left( t_{k} ^{b_k-1} \cdot (1-t_k)^{c_k -b_k -1} \right) 
  \cdot \left( 1-\sum x_k t_k \right) ^{-a} dt_1 \wedge \cdots \wedge dt_m \\
  & \left( =\int_{\De_{i_1 \cdots i_r}} u \vph  \right)
  =e^{\pi \sqrt{-1} (\sum b_{i_p} -\sum c_{i_p} +r)} 
  \prod _{p=1} ^{r} x_{i_{p}} ^{1-c_{i_{p}}} \cdot F _{i_1 \cdots i_r} ,
\end{align*}
and hence this integral corresponds to the local solution $f_{i_1 \cdots i_r} $ to 
$E_A (a,b,c)$ given in Proposition \ref{solution}.
\end{Th}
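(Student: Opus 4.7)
The plan is to prove the theorem by a direct change of variables via $\iota_{i_1 \cdots i_r}$, carefully tracking (a) the Jacobian, (b) the extracted powers of the $x_{i_p}$, and (c) the branches of the multi-valued factors, and then invoking Proposition \ref{series-cycle} and Proposition \ref{solution} for the final identification.

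\smallskip

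\emph{Step 1: Pull back the form.} Since $\De_{i_1 \cdots i_r} = (\iota_{i_1\cdots i_r})_* \tilde\De_{i_1\cdots i_r}$, I would rewrite
\[
 \int_{\De_{i_1\cdots i_r}} u\varphi = \int_{\tilde\De_{i_1\cdots i_r}} \iota_{i_1\cdots i_r}^{*}(u\varphi).
\]
From the definition $t_{i_p} = s_{i_p}/x_{i_p}$, $t_{j_q}=s_{j_q}$, the Jacobian contributes $\prod_p x_{i_p}^{-1}$ to $dt_1\wedge\cdots\wedge dt_m$, the monomial $t_{i_p}^{b_{i_p}-1}$ contributes $x_{i_p}^{1-b_{i_p}} s_{i_p}^{b_{i_p}-1}$, and $1-\sum x_k t_k$ pulls back identically to $v_{i_1\cdots i_r}$. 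The factors over $j_q$ are unchanged.

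\smallskip

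\emph{Step 2: Handle the branch of $(1-t_{i_p})^{c_{i_p}-b_{i_p}-1}$.} On $\tilde\De_{i_1\cdots i_r}$ we have $s_{i_p}>x_{i_p}$, so $1-t_{i_p}=(x_{i_p}-s_{i_p})/x_{i_p}<0$. According to the branch convention fixed in the table just before the theorem statement, $\arg(1-t_{i_p})=-\pi$, so
\[
 (1-t_{i_p})^{c_{i_p}-b_{i_p}-1} = e^{-\pi\sqrt{-1}(c_{i_p}-b_{i_p}-1)}\cdot x_{i_p}^{b_{i_p}-c_{i_p}+1}\cdot (s_{i_p}-x_{i_p})^{c_{i_p}-b_{i_p}-1}.
\]
Summing the $x_{i_p}$-exponents coming from the Jacobian, $t_{i_p}^{b_{i_p}-1}$, and $(1-t_{i_p})^{c_{i_p}-b_{i_p}-1}$ gives
\[
 -1 + (1-b_{i_p}) + (b_{i_p}-c_{i_p}+1) = 1-c_{i_p},
\]
so the total extracted monomial is $\prod_p x_{i_p}^{1-c_{i_p}}$, and the total phase is
\[
 \prod_p e^{-\pi\sqrt{-1}(c_{i_p}-b_{i_p}-1)} = e^{\pi\sqrt{-1}(\sum b_{i_p}-\sum c_{i_p}+r)},
\]
exactly matching the factor in the statement. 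What remains after extraction is precisely $u_{i_1\cdots i_r}\varphi_{i_1\cdots i_r}$, so
\[
 \int_{\De_{i_1\cdots i_r}} u\varphi = e^{\pi\sqrt{-1}(\sum b_{i_p}-\sum c_{i_p}+r)} \prod_p x_{i_p}^{1-c_{i_p}} \int_{\tilde\De_{i_1\cdots i_r}} u_{i_1\cdots i_r}\varphi_{i_1\cdots i_r} = e^{\pi\sqrt{-1}(\cdots)} \prod_p x_{i_p}^{1-c_{i_p}} F_{i_1\cdots i_r}.
\]

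\smallskip

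\emph{Step 3: Identify the local solution.} Substituting the formula for $F_{i_1\cdots i_r}$ from Proposition \ref{series-cycle} reveals the power factor $\prod_p x_{i_p}^{1-c_{i_p}}$ together with $F_A(a+r-\sum c_{i_p}, b^{i_1\cdots i_r}, c^{i_1\cdots i_r}; x)$, which by (\ref{series-sol}) is (up to the explicit gamma factors and the unimodular constant $e^{\pi\sqrt{-1}(\sum b_{i_p}-\sum c_{i_p}+r)}$) the solution $f_{i_1\cdots i_r}$. This establishes the correspondence with the local solution in Proposition \ref{solution}.

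\smallskip

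\emph{Main obstacle.} The routine part is the exponent bookkeeping; the delicate part is justifying that the pushforward $\iota_{i_1\cdots i_r\,*}\tilde\De_{i_1\cdots i_r}$ is a genuine regularized twisted cycle for $u$ on $M$ with the branch specified on $D_{i_1\cdots i_r}$, and that the branch of $(1-t_{i_p})^{c_{i_p}-b_{i_p}-1}$ picked up by the construction of $\tilde\De_{i_1\cdots i_r}$ (which involves small circles $S_{i_p}$ around $L_{i_p}=(s_{i_p}=0)$, crossing $s_{i_p}=x_{i_p}$) is compatible with the convention $\arg(1-t_{i_p})=-\pi$. This is what forces the specific sign $-\pi$ in the phase and is where one must be careful to avoid an overall $e^{\pm 2\pi\sqrt{-1}}$ error.
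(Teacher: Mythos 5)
Your proposal is correct and follows essentially the same route as the paper: both use the inclusion $\iota_{i_1\cdots i_r}(\sigma_{i_1\cdots i_r})\subset D_{i_1\cdots i_r}$ with the branch table to extract the phase $e^{\pi\sqrt{-1}(\sum b_{i_p}-\sum c_{i_p}+r)}$, perform the substitution $t_{i_p}=s_{i_p}/x_{i_p}$, $t_{j_q}=s_{j_q}$ to produce $\prod_p x_{i_p}^{1-c_{i_p}}$ and the integrand $u_{i_1\cdots i_r}\vph_{i_1\cdots i_r}$, and then invoke Proposition \ref{series-cycle} for the identification with $f_{i_1\cdots i_r}$. The only difference is cosmetic ordering (the paper rewrites the branch on $M$ first and then pulls back, you pull back and track branches simultaneously), and your bookkeeping of exponents and phases agrees with the paper's.
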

\begin{proof}
Since $\iota _{i_1 \cdots i_r} (\sigma _{i_1 \cdots i_r}) \subset D_{i_1 \cdots i_r}$, 
the left hand side is equal to 
\begin{align*}
  e^{\pi \sqrt{-1} (\sum b_{i_p} -\sum c_{i_p} +r)} 
  \cdot & \int _{\De _{i_{1} \cdots i_{r}} }  
  \prod_p \left( t_{i_p} ^{b_{i_p}-1} \cdot (t_{i_p} -1)^{c_{i_p} -b_{i_p} -1} \right) \\
  & \cdot \prod_q \left( t_{j_q} ^{b_{j_q}-1} \cdot (1-t_{j_q})^{c_{j_q} -b_{j_q} -1} \right) 
  \cdot \left( 1-\sum x_k t_k \right) ^{-a} dt_1 \wedge \cdots \wedge dt_m  ,
\end{align*}
where the branch of the integrand is determined naturally.  
Pulling back this integral by $\iota _{i_1 \cdots i_r}$ leads the first claim. 
This and Proposition \ref{series-cycle} imply the second claim. 
\end{proof}
\begin{Rem}
  Except in the case of $\{ i_1 ,\ldots ,i_r \} =\emptyset$, 
  the twisted cycle $\De_{i_1 \cdots i_r}$ is different from 
  the regularization of $D_{i_1 \cdots i_r}\otimes u$ 
  as elements in $H_m (\CC_{\bu} (M,u))$. 
\end{Rem}

The replacement $u \mapsto u^{-1} =1/u$ and the construction same as $\De _{i_1 \cdots i_r}$ give 
the twisted cycle $\De_{i_1 \cdots i_r} ^{\vee}$ 
which represents an element in $H_m (\CC_{\bu} (M,u^{-1}))$. 
We obtain the intersection numbers of the twisted cycles $\{ \De _{i_1 \cdots i_r} \}$ and 
$\{ \De _{i_1 \cdots i_r}^{\vee} \}$. 

\begin{Th}\label{H-intersection}
  \begin{enumerate}[(i)]
  \item For $I,J \subset \{ 1,\ldots ,m \}$ such that $I \neq J$, 
    we have $I_h (\De_I ,\De_J ^{\vee}) =0$. 
  \item The self-intersection number of $\De _{i_1 \cdots i_r}$ is 
  \begin{align*}
    I_{h} (\De _{i_{1} \cdots i_{r}} ,\De _{i_{1} \cdots i_{r}}^{\vee} )
    = \frac{\al -\prod_p \ga_{i_p}}{(\al -1) \prod_p (1-\ga _{i_p})} 
    \cdot \prod_q \frac{\be_{j_q} (1-\ga_{j_q})}{(1-\be_{j_q})(\be_{j_q} -\ga_{j_q})} .
  \end{align*}
  \end{enumerate}
\end{Th}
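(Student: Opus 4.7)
For part (i), I would produce disjoint-support representatives of $\De_I$ and $\De_J^\vee$ in $M$. Pick any $k_0 \in I\triangle J$, say $k_0 \in I \setminus J$. In the $t_{k_0}$-coordinate, the image $\iota_I(\si_I)$ sits in $\{t_{k_0} > 1\}$ while $\iota_J(\si_J)$ sits in $\{0 < t_{k_0} < 1\}$. The projections to the $t_{k_0}$-plane of the tubes appearing in $\tilde{\De}_I$ are either large circles of radius $\vep/x_{k_0}$ about $t_{k_0}=0$, or small circles about $t_{k_0}=1/x_{k_0}$, or sit inside $\{t_{k_0}>1\}$; whereas for $\De_J^\vee$ the projection lies near $[0,1]$ with small tubes of radius $\vep$ about $t_{k_0}=0,1$. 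An elementary check using $x_k < \vep < 1/4$ shows these two $t_{k_0}$-projections are disjoint. Hence $\De_I$ and $\De_J^\vee$ have disjoint supports in $M$, and $I_h(\De_I, \De_J^\vee) = 0$.

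For part (ii), I would exploit the product decomposition $\si_I = S \times \prod_q I_q$, where $S$ is the $r$-simplex in $(s_{i_1},\ldots,s_{i_r})$ bounded by $L_{i_1},\ldots,L_{i_r},L_{2m-r+1}$ and each $I_q$ is the interval $[\vep,1-\vep]$ in $s_{j_q}$ bounded by $L_{j_q}$ and $L_{m+q}$. Each bounding hyperplane involves variables from a single factor, and at each vertex of $\si_I$ the $m$ meeting hyperplanes naturally split into $r$ simplex-hyperplanes and $m-r$ interval-hyperplanes. The self-intersection formula for the regularization of a bounded chamber (cf.\ Chapter 2 of \cite{AK} or Chapter VIII of \cite{KY}) writes $I_h$ as a sum over such vertices, with each summand a product of local monodromy factors over the meeting hyperplanes; consequently
\begin{align*}
I_h(\tilde{\De}_I, \tilde{\De}_I^\vee) = I_h^S \cdot \prod_{q=1}^{m-r} I_h^{I_q}.
\end{align*}
Applying the $r$-simplex formula $(1 - \prod \ga_p)/\prod(1-\ga_p)$ with monodromies $\ga_{i_1},\ldots,\ga_{i_r},\al^{-1}$ and simplifying yields
\begin{align*}
I_h^S = \frac{\al - \prod_p \ga_{i_p}}{(\al-1)\prod_p (1-\ga_{i_p})},
\end{align*}
while the interval formula $(1-\ga_0\ga_1)/((1-\ga_0)(1-\ga_1))$ with monodromies $\be_{j_q}, \ga_{j_q}/\be_{j_q}$ yields
\begin{align*}
I_h^{I_q} = \frac{\be_{j_q}(1-\ga_{j_q})}{(1-\be_{j_q})(\be_{j_q}-\ga_{j_q})}.
\end{align*}
Multiplying gives the stated closed form, and the vanishing of all off-diagonal intersections in (i) makes this the sole nonzero entry on the diagonal.

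The principal obstacle will be justifying the product factorization. Although the bounding hyperplanes and the vertex combinatorics split cleanly along the factors, the multi-valued function $u_I$ does not: the factor $v_I^{-a}$ with $v_I = 1 - \sum_p s_{i_p} - \sum_q x_{j_q} s_{j_q}$ couples all coordinates. The rescue is that the intersection pairing is supported at the vertices of $\si_I$, and the local monodromy around each bounding hyperplane depends only on the exponents of $u_I$, not on the remaining coordinates; hence the vertex sum genuinely splits. A minor related point is that the tube $S_{2m-r+1}$ should be understood as encircling the singular hyperplane $v_I = 0$ (which is $O(\vep)$-close to $L_{2m-r+1}$ near $\si_I$), so that it correctly contributes the monodromy $\al^{-1}$ of the $v_I^{-a}$ factor to $I_h^S$.
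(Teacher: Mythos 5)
Your part (ii) follows essentially the same route as the paper's own proof: transport the computation to $\tilde{\De}_{i_1\cdots i_r}$ via $\iota_{i_1\cdots i_r}$, use the product structure (an $r$-simplex times $m-r$ intervals), and invoke the Kita--Yoshida formulas; your simplex factor with monodromies $\ga_{i_1},\ldots,\ga_{i_r},\al^{-1}$ and your interval factor with monodromies $\be_{j_q},\ \ga_{j_q}\be_{j_q}^{-1}$ reproduce exactly the expression appearing in the paper's proof, and the two subtleties you flag (the coupling of coordinates through $v_{i_1\cdots i_r}^{-a}$, and the tube about $L_{2m-r+1}$ encircling $v_{i_1\cdots i_r}=0$ so that it contributes $\al^{-1}$) are precisely the points the paper absorbs into its construction, where the exponent attached to $L_{i_p}$ is $c_{i_p}-1$ and that attached to $L_{2m-r+1}$ is $-a$. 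One correction: the expansion of the self-intersection number is a sum over \emph{all} faces of the chamber (including the chamber itself, which contributes $1$), not only over vertices; your factorization argument survives unchanged, because the face poset of a product is the product of the face posets and each face's contribution is a product of local factors, but as stated the "vertex sum" would not even give the right answer for a single interval.

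Part (i) is where you genuinely diverge from the paper. The paper's argument is not geometric: it uses Theorem \ref{solutions-cycle}, by which $\De_I$ represents the local solution $f_I$, and then argues as in Lemma 4.1 of \cite{GM} --- intersection numbers are invariant under analytic continuation in $x$, while continuing $x_{k_0}$ around $0$ (for $k_0$ in the symmetric difference of $I$ and $J$) multiplies the classes $\De_I$ and $\De_J^{\vee}$ by distinct scalars, forcing $I_h(\De_I,\De_J^{\vee})=0$. Your disjoint-support argument is a legitimate and more self-contained alternative (it needs neither \cite{GM} nor the identification with local solutions), but it shifts the burden onto the fine structure of the regularization: you must know that on \emph{every} piece of $\tilde{\De}_I$ the coordinate $s_{k_0}$, $k_0\in I$, keeps modulus at least $\vep$. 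This holds for the construction whose tubes are adapted to the defining linear forms, but fails for a naive Euclidean-orthogonal-circle picture near corners where $L_{k_0}$ meets the non-orthogonal hyperplane $L_{2m-r+1}$, so this point deserves an explicit argument. Also your inequality needs sharpening: $x_k<\vep$ alone only gives $\vep/x_{k_0}>1$, whereas you must separate the circle $|t_{k_0}|=\vep/x_{k_0}$ from the circle $|t_{k_0}-1|=\vep$ in the support of $\De_J^{\vee}$, i.e.\ you need $\vep/x_{k_0}>1+\vep$; this is exactly what the paper's hypothesis $\sum_k x_k(1+\vep_k)<\vep$ supplies, so quote that rather than $x_k<\vep<1/4$. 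With these two points repaired, your (i) is correct, and its payoff is elementarity; the paper's version is softer and shorter but leans on an external lemma and on the solution-theoretic interpretation of the cycles.
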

\begin{proof}
(i) Since $\De _{i_1 \cdots i_r} $'s represent local solutions (\ref{series-sol}) 
to $E_A (a,b,c)$ by Theorem \ref{solutions-cycle}, 
this claim is followed from similar arguments to the proof of Lemma 4.1 in \cite{GM}. \\
(ii) By $\iota _{i_1 \cdots i_r}$, the self-intersection number of
$\De _{i_1 \cdots i_r}$ is equal to that of $\tilde{\De }_{i_1 \cdots i_r}$ 
with respect to the multi-valued function $u_{i_1 \cdots i_r}$. 
To calculate this, we apply results in \cite{KY}. 
Since we construct the twisted cycle $\tilde{\De }_{i_1 \cdots i_r}$ from 
the direct product of an $r$-simplex and $(m-r)$ intervals, 
the self-intersection number of $\tilde{\De }_{i_1 \cdots i_r}$ 
is obtained as the product of those of the simplex and the intervals. 
Thus we have
\begin{align*}
  I_{h} (\De _{i_1 \cdots i_r} ,\De _{i_1 \cdots i_r}^{\vee} )
  = \frac{1-\prod_{p}\ga_{i_p} \cdot \al^{-1}}{\prod_{p}(1-\ga_{i_p})\cdot (1-\al^{-1})}
  \cdot \prod_q \frac{1-\ga_{j_q}}{(1-\be_{j_q})(1-\ga_{j_q} \be_{j_q}^{-1})} .
\end{align*}
\end{proof}

\section{Intersection numbers of twisted cohomology groups}
\label{section-cohomology}
In this section, we review twisted cohomology groups and 
the intersection form between twisted cohomology groups in our situation, 
and collect some results of \cite{M-FA} in which intersection numbers 
of twisted cocycles are evaluated. 

Recall that 
\begin{align*}
  & M=\C^m -\left(   \bigcup_k (t_k =0) \cup 
  \bigcup_k (1-t_k =0) \cup (v=0) \right), \\
  & u= \prod t_k ^{b_k} (1-t_k)^{c_k-b_k-1} \cdot v^{-a}.
\end{align*}
We consider the logarithmic $1$-form 
$$
\om :=d\log u =\frac{du}{u} .
$$
We denote the $\C$-vector space of smooth $k$-forms on $M$ by $\CE^k (M)$. 
We define the covariant differential operator
$\na_{\om} :\CE^k (M) \to \CE^{k+1} (M)$ by
$$
\na_{\om} (\psi):= d\psi +\om \wedge \psi ,\ \ \psi \in \CE^{k} (M).
$$
Because of $\na_{\om} \circ \na_{\om} =0$, we have a complex 
$$
\CE^{\bu} (M) :\cdots 
\overset{\na_{\om}}{\longrightarrow} \CE^k (M)
\overset{\na_{\om}}{\longrightarrow} \CE^{k+1}(M)
\overset{\na_{\om}}{\longrightarrow} \cdots ,
$$
and its $k$-th cohomology group $H^k(M,\na_{\om})$. 
It is called the $k$-th twisted de Rham cohomology group. 
An element of $\ker \na_{\om}$ is called a twisted cocycle.
By replacing $\CE^k (M)$ with the $\C$-vector space $\CE_c ^k (M)$ of 
smooth $k$-forms on $M$ with compact support, we obtain 
the twisted de Rham cohomology group 
$H_c ^k (M,\na_{\om})$ with compact support. 
By \cite{Cho}, we have $H^k(M,\na_{\om})=0$ for all $k \neq m$. 
Further, by Lemma 2.9 in \cite{AK}, 
there is a canonical isomorphism 
$$
\jmath : H^m (M,\na_{\om}) \to H_c ^m (M,\na_{\om}) .
$$ 
By considering $u^{-1} =1/u$ instead of $u$, we have 
the covariant differential operator $\na_{-\om}$ and 
the twisted de Rham cohomology group $H^k (M,\na_{-\om})$. 
The intersection form $I_c$ between $H^m (M,\na_{\om})$ and $H^m (M,\na_{-\om})$ 
is defined by 
$$
I_c (\psi ,\psi'):=\int_M \jmath (\psi) \wedge \psi' ,\quad 
\psi \in H^m(M,\na_{\om}),\ \psi' \in H^m (M,\na_{-\om}),
$$
which converges because of the compactness of the support of $\jmath (\psi)$.

\begin{Rem}
  By Lemma 2.8 and Theorem 2.2 in \cite{AK}, we have 
  \begin{align*}
    & \dim H_k (\CC_{\bu} (M,u))=0 \ \  (k \neq m),\\
    & \dim H_m (\CC_{\bu} (M,u))=\dim H^m(M,\na_{\om}) =(-1)^m  \chi (M)=2^m , 
  \end{align*}
  where $\chi (M)$ is the Euler characteristic of $M$. 
  Under our assumption for the parameters $a,\ b$ and $c$ (see Section \ref{section-intro}), 
  since the determinant of the intersection matrix $(I_h (\De_I ,\De_J ^{\vee}))$ 
  is not zero by Theorem \ref{H-intersection}, 
  the twisted cycles $\{ \De_I \} _I$ form a basis of $H_m (\CC_{\bu} (M,u))$.   
\end{Rem}

The intersection numbers of some twisted cocycles are evaluated in \cite{M-FA}. 
We use a part of these results. 
We consider $m$-forms 
$$
\vph^{i_1 \cdots i_r} :=\frac{dt_1 \wedge \cdots \wedge dt_m}
{\prod_p (t_{i_p} -1) \cdot \prod_q t_{j_q}}  
$$
on $M$, which is denoted by $\vph_{x,(v_1,\ldots ,v_m)}$ 
with $v_{i_p} =1,\ v_{j_q} =0$ in \cite{M-FA}. 
Note that $\vph =\vph^{\emptyset}$ is equal to 
$\vph =\vph_{\emptyset}$ defined in Section \ref{section-THG} 
(and \ref{section-cycle}). We put
\begin{eqnarray*}
A_{i_1 \cdots i_r} =A_{I}
  :=\sum _{\{ I^{(l)} \}} \prod_{l=1}^{r} \frac{1}{a-\sum c_{i_p^{(l)}} +l} , 
\end{eqnarray*}
where $\{ I^{(l)} \}$ runs sequences of subsets of $I=\{ i_1 , \ldots ,\ i_r \}$, which satisfy 
$$
I=I^{(r)} \supsetneq I^{(r-1)} \supsetneq \cdots \supsetneq I^{(2)} \supsetneq I^{(1)} \neq \emptyset ,
$$
and we write $I^{(l)} =\{ i_{1} ^{(l)} ,\ldots ,i_{l} ^{(l)} \}$. 

\begin{Prop}[\cite{M-FA}] \label{C-intersection}
  We have 
  \begin{align*}
    I_{c} (\vph^I ,\vph^{I'}) 
    = (2\pi \sqrt{-1}) ^{m} \cdot 
    \sum _{N \subset \{ 1 ,\ldots ,\ m \}} \left( A_N 
      \prod_{n \not\in N} \frac{\de _{I,I'}(n)}{\tilde{b}_{I}(n)}   \right) ,
  \end{align*}
  where  
  \begin{align*}
    \de _{I,I'}(n):=&\left\{
      \begin{array}{l}
        1 \ \ (n \in (I \cap I')\cup (I^c \cap {I'}^c) ) \\
        0 \ \ ({\rm otherwise}) ,
      \end{array}
    \right. \\
    \tilde{b}_{I}(n):=&\left\{
      \begin{array}{l}
        c_n -b_n -1 \ \ (n \in I ) \\
        b_n \ \ (n \in I^c ) .
      \end{array}
    \right. 
  \end{align*}
  Under our assumptions for the parameters, 
  $\{ \vph^I \} _I$ form a basis of $H^m (M,\na_{\om})$. 
\end{Prop}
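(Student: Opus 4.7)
The plan is to compute $I_c(\vph^I, \vph^{I'})$ by building the compactly supported representative $\jmath(\vph^I)$ explicitly and then reducing the resulting integral to a finite sum of iterated residues along the divisors of $M$. This follows the general strategy of Cho--Matsumoto for complements of arrangement-like divisors, adapted here to accommodate the non-coordinate hyperplane $v=0$.

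First I would construct $\jmath(\vph^I)$ via the partition-of-unity procedure of Section~2 of \cite{AK}: near each polar divisor of $\vph^I$ (namely $t_{i_p}=1$ for $i_p \in I$ and $t_{j_q}=0$ for $j_q \notin I$), cut $\vph^I$ off against a bump function and correct by a $\na_\om$-exact form. The correction along a divisor on which $u$ has exponent $\tilde{b}_I(n)$ introduces a factor $1/\tilde{b}_I(n)$ in the final answer, which is the origin of these denominators in the formula.

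Next I would evaluate $\int_M \jmath(\vph^I) \wedge \vph^{I'}$ by iterated residues along the coordinate directions. For an index $n$: if the $t_n$-polar divisors of $\vph^I$ and $\vph^{I'}$ coincide (i.e.\ $\de_{I,I'}(n)=1$) then the $t_n$-integration contributes a factor $(2\pi\sqrt{-1})/\tilde{b}_I(n)$ by a standard one-variable residue; if they disagree, the straight residue vanishes, and a nonzero contribution in that direction can arise only by picking up a residue along $v=0$. The set $N$ collecting those indices where a residue along $v=0$ is taken must therefore contain the symmetric difference $I \triangle I'$, while the remaining transverse directions $n \notin N$ require $\de_{I,I'}(n)=1$, producing the factor $\prod_{n \notin N} \de_{I,I'}(n)/\tilde{b}_I(n)$.

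For fixed $N=\{i_1,\ldots,i_r\}$, the factor $A_N$ arises from the iterated residue along $v=0$ combined with residues along the coordinate directions in $N$, taken in some order. For each ordering $\si$ of $N$, a sequence of blow-ups (or equivalently an iterated application of the Poincar\'e residue) produces denominators $a - \sum_{p \leq l} c_{i_{\si(p)}} + l$ at stage $l$, reflecting the cumulative shift of the exponent of $u$ along $v=0$ induced by the prior residues and the coordinate exponents $c_{i_{\si(p)}} - 1$. Summing over orderings $\si$ is equivalent to summing over flags $N=I^{(r)} \supsetneq \cdots \supsetneq I^{(1)} \neq \emptyset$ via $I^{(l)} = \{i_{\si(1)},\ldots,i_{\si(l)}\}$, and thus reproduces $A_N$. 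The main obstacle will be the meticulous bookkeeping of these iterated residues --- in particular verifying the exponent shift at each blow-up stage and reconciling signs and orientations so that exactly $A_N$ (and not some variant) emerges. The basis property of $\{\vph^I\}_I$ then follows from the invertibility of the resulting intersection matrix under the genericity assumption on $(a,b,c)$.
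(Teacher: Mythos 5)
Your proposal cannot be checked against an internal argument, because the paper does not prove Proposition \ref{C-intersection} at all: it is quoted from \cite{M-FA} (``The intersection numbers of some twisted cocycles are evaluated in \cite{M-FA}. We use a part of these results.''), so the proposal must stand on its own. Judged that way, it has a genuine gap at the decisive step, namely the origin of the terms with $N\neq\emptyset$. You attribute them to ``picking up a residue along $v=0$,'' with the denominators $a-\sum_{p\le l}c_{i_p^{(l)}}+l$ explained as a ``cumulative shift of the exponent of $u$ along $v=0$.'' This mechanism does not exist: both $\vph^{I}$ and $\vph^{I'}$ are holomorphic along $v=0$, so every straight or iterated residue along $v=0$ vanishes identically; moreover the exponent of $u$ along $v=0$ is $-a$, a number attached to that divisor once and for all --- it does not shift as residues in other directions are taken. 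Consequently a localization of $I_c(\vph^{I},\vph^{I'})$ at the finite divisors $t_k=0,\ t_k=1,\ v=0$ alone --- which is all that your construction of $\jmath(\vph^{I})$ (cut-offs only near the polar divisors of $\vph^{I}$) can see --- produces only the $N=\emptyset$ term, i.e.\ the diagonal matrix $(2\pi\sqrt{-1})^{m}\prod_{n}\de_{I,I'}(n)/\tilde{b}_{I}(n)$. This is already false for $m=1$: there $I_{c}(\vph^{\emptyset},\vph^{\emptyset})=2\pi\sqrt{-1}\left(\frac{1}{b_1}+\frac{1}{a-c_1+1}\right)$, and the term $\frac{1}{a-c_1+1}$ is the contribution of the point $t=\infty$, where $u=t^{b_1}(1-t)^{c_1-b_1-1}(1-x_1t)^{-a}$ has exponent $a-c_1+1$ precisely because $(1-x_1t)^{-a}$ behaves like $t^{-a}$; the point $v=0$, with exponent $-a$, contributes nothing.

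What is missing is the boundary at infinity of a good compactification, and that is where the whole content of the formula lies. In $\P^{m}$ (or in $(\P^1)^{m}$) the closure of the $F_A$ arrangement fails to be normal crossing along the strata at infinity $\bigcap_{n\in I^{(l)}}\{t_n=\infty\}$; blowing these up produces exceptional divisors whose exponents are exactly $a-\sum_{p\in I^{(l)}}c_{i_p}+l$ (in $(\P^1)^{m}$, each $\{t_n=\infty\}$ carries exponent $a-c_n+1$, and the closure of $v=0$ passes through the deeper strata, which is how $-a$ enters these sums), and $A_N$ is then the vertex sum over maximal chains (flags) of such nested exceptional divisors, while $\prod_{n\notin N}\de_{I,I'}(n)/\tilde{b}_{I}(n)$ collects the transverse finite directions. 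Relatedly, your $\jmath(\vph^{I})$ is not a compactly supported representative: corrections are needed near \emph{all} ends of $M$ --- the non-polar finite divisors, $v=0$, and infinity --- not only near the poles of $\vph^{I}$; without them the pairing $\int_M\jmath(\vph^{I})\wedge\vph^{I'}$ is not even defined, and it is exactly the corrections at infinity that generate the $A_N$'s. Your bookkeeping $-a+\sum_{p\le l}(c_{i_p}-1)=-\bigl(a-\sum_{p\le l}c_{i_p}+l\bigr)$ does reproduce (minus) these infinity exponents numerically, which is why the proposal appears to land on the stated formula, but as written it describes the answer rather than derives it, and the step it rests on --- residues along $v=0$ --- provably yields zero.
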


\section{Twisted period relations} \label{section-TPR}
Because of the compatibility of intersection forms and pairings obtained by integrations 
(see \cite{CM}), we have the following theorem. 
\begin{Th}[Twisted period relations, \cite{CM}] \label{TPR}
  We have 
  \begin{align}
    \label{TPR-eq}
    I_c (\vph ^{I} ,\vph ^{I'}) 
    = \sum _{N \subset \{ 1,\ldots ,m \}} \frac{1}{I_h (\De _N ,\De _{N}^{\vee} )}
    \cdot g_{I,N} \cdot g_{I',N} ^{\vee} ,
  \end{align}
  where 
  \begin{eqnarray*}
    g_{I,N} =\int _{\De _{N}} u\vph^I ,\ 
    g_{I',N} ^{\vee} = \int _{\De _{N} ^{\vee}} u^{-1} \vph^{I'} .
  \end{eqnarray*}
\end{Th}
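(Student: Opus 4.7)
The plan is to combine the general compatibility between the intersection pairings of twisted (co)homology and the period pairing (which is the content of \cite{CM}) with the diagonality of the homology intersection matrix obtained in Theorem \ref{H-intersection}(i).

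First I would set up the abstract framework. The twisted cycles $\{ \De_N \}_N$ form a basis of $H_m(\CC_\bu(M,u))$: this is recorded in the remark after Theorem \ref{H-intersection}, using nonvanishing of $\det(I_h(\De_I,\De_J^\vee))$ under our parameter assumptions. Likewise $\{ \De_N^\vee \}_N$ is a basis of $H_m(\CC_\bu(M,u^{-1}))$, and by Proposition \ref{C-intersection} the cocycles $\{ \vph^I \}_I$ (resp.\ $\{ \vph^{I'} \}_{I'}$) form a basis of $H^m(M,\na_\om)$ (resp.\ $H^m(M,\na_{-\om})$). The period pairings are $g_{I,N} = \int_{\De_N} u\vph^I$ and $g_{I',N}^\vee = \int_{\De_N^\vee} u^{-1}\vph^{I'}$.

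Next I would invoke the general twisted period relation of \cite{CM}: writing $H = (I_h(\De_N,\De_{N'}^\vee))_{N,N'}$ for the $2^m \times 2^m$ homology intersection matrix, the compatibility between the two intersection forms and the period pairings yields
\begin{align*}
I_c(\vph^I,\vph^{I'}) = \sum_{N,N'} g_{I,N} \cdot (H^{-1})_{N,N'} \cdot g_{I',N'}^\vee.
\end{align*}
The final step is to use Theorem \ref{H-intersection}(i), which gives $I_h(\De_N,\De_{N'}^\vee) = 0$ whenever $N \neq N'$. Thus $H$ is diagonal, so $(H^{-1})_{N,N'} = \delta_{N,N'} / I_h(\De_N,\De_N^\vee)$, and the double sum collapses to the single sum in the statement.

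The only nontrivial input we need is the abstract period relation, so if one wished to reprove rather than quote \cite{CM}, the main obstacle would be establishing the compatibility of the intersection pairings on twisted (co)homology with the integration pairing. That argument hinges on the canonical isomorphism $\jmath : H^m(M,\na_\om) \to H^m_c(M,\na_\om)$ and on representing each class in $H^m_c$ by a form supported in a small neighborhood of the skeleton dual to a given basis of twisted cycles; the local intersection contributions then match the topological intersection numbers used in the definition of $I_h$. In our present setting, however, this is packaged into the cited result, and the remaining work—verifying that our bases satisfy the hypotheses and that $H$ is diagonal—has already been carried out.
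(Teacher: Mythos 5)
Your proposal is correct and follows essentially the same route as the paper: the paper also derives (\ref{TPR-eq}) by quoting the compatibility of intersection forms with the integration pairings from \cite{CM}, with the single-sum form resting on the diagonality of the homology intersection matrix from Theorem \ref{H-intersection}(i). Your write-up merely makes explicit the intermediate double-sum formula with $H^{-1}$ that the paper leaves implicit.
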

By the results in Sections \ref{section-cycle} and \ref{section-cohomology}, 
twisted period relations (\ref{TPR-eq}) can be 
reduced to quadratic relations among $F_A$'s. 
We write out two of them as a corollary. 
\begin{Cor}\label{TPR-cor}
  We use the notations  
  \begin{align*}
    & b^{i_1 \cdots i_r} =b+\sum (1-c_{i_p})e_{i_p} ,\ 
    c^{i_1 \cdots i_r} =c+2\sum (1-c_{i_p})e_{i_p}  
    \quad ({\rm see \ Proposition \ \ref{solution}}), \\
    & a_{i_1 \cdots i_r} :=a+r-\sum c_{i_p} ,\\ 
    &\tilde{b}^{i_1 \cdots i_r} :=(1,\ldots ,1) -b^{i_1 \cdots i_r},\ 
    \tilde{c}^{i_1 \cdots i_r} :=(2,\ldots ,2) -c^{i_1 \cdots i_r}.
  \end{align*}
  \begin{enumerate}[(i)]
  \item The equality (\ref{TPR-eq}) for $I=I'=\emptyset$ is reduced to 
    \begin{align*}
      & \frac{\prod (c_k-1)}{a} 
      \cdot \sum_{I} \left( A_I \prod_{j \not\in I} \frac{1}{b_j} \right) \\
      & = \sum _{I} \Bigg[ 
      \prod_q \frac{c_{j_q}-b_{j_q}-1}{b_{j_q}}
      \cdot \frac{1}{a_{i_1 \cdots i_r}} 
      \cdot F_A (a_{i_1 \cdots i_r} ,b^{i_1 \cdots i_r} ,c^{i_1 \cdots i_r} ;x) \\
      & \hspace{50mm} 
      \cdot F_A (-a_{i_1 \cdots i_r} ,-b^{i_1 \cdots i_r} ,\tilde{c}^{i_1 \cdots i_r} ;x) 
      \Biggr] .
    \end{align*}
  \item The equality (\ref{TPR-eq}) for $I=\emptyset ,\ I'=\{ 1,\ldots ,m \}$ is reduced to   
    \begin{align*}
      &  \frac{\prod (1-c_k )}{a} \cdot A_{1\cdots m}  \\
      & =\sum _{I} 
      \frac{(-1)^r}{a_{i_1 \cdots i_r}} 
      \cdot F_A (a_{i_1 \cdots i_r} ,b^{i_1 \cdots i_r} ,c^{i_1 \cdots i_r} ;x)  
      \cdot F_A (-a_{i_1 \cdots i_r},
      \tilde{b}^{i_1 \cdots i_r} ,\tilde{c}^{i_1 \cdots i_r} ;x) .
    \end{align*}
  \end{enumerate}
\end{Cor}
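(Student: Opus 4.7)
The plan is to specialize the twisted period relation (\ref{TPR-eq}) to the two pairs $(I,I')=(\emptyset,\emptyset)$ and $(I,I')=(\emptyset,\{1,\dots,m\})$ and to substitute, on both sides of the identity, the explicit closed-form expressions obtained in the previous sections for each of the five ingredients $I_c(\vph^I,\vph^{I'})$, $I_h(\De_N,\De_N^\vee)$, $g_{I,N}$, and $g_{I',N}^\vee$. All five are already available, so the proof is essentially a plug-in followed by a clean-up that eliminates every Gamma function and every exponential phase.

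For the cohomological side I use Proposition \ref{C-intersection}. When $I=I'=\emptyset$ one has $\de_{\emptyset,\emptyset}(n)\equiv 1$ and $\tilde b_\emptyset(n)=b_n$, so every $N$ contributes and the factor $\sum_N A_N\prod_{n\notin N}b_n^{-1}$ emerges, giving $(2\pi\sqrt{-1})^m$ times the left-hand side of (i). When $(I,I')=(\emptyset,\{1,\dots,m\})$ the set $(I\cap I')\cup(I^c\cap I'^c)$ is empty, hence $\de_{I,I'}\equiv 0$; only the index $N=\{1,\dots,m\}$ survives (its product over $n\notin N$ being empty), producing $(2\pi\sqrt{-1})^m A_{1\cdots m}$.

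For the period integrals I combine Theorem \ref{solutions-cycle} with Proposition \ref{series-cycle}: the period $g_{\emptyset,N}$ is an explicit phase times $\prod_p x_{i_p}^{1-c_{i_p}}$ times a Gamma factor times $F_A(a_N,b^N,c^N;x)$. The dual $g_{\emptyset,N}^\vee$ is obtained by the same construction applied to $u^{-1}$, i.e.\ under the substitution $(a,b,c)\mapsto(-a,-b,(2,\dots,2)-c)$; the resulting power of $x_{i_p}$ is $x_{i_p}^{c_{i_p}-1}$ and the exponential phase is the inverse of the one in $g_{\emptyset,N}$, so both factors cancel in the product $g_{\emptyset,N}\cdot g_{\emptyset,N}^\vee$. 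For case (ii) the form $\vph^{\{1,\dots,m\}}$ differs from $\vph$ by the factor $(-1)^m\prod_k t_k/(1-t_k)$, which amounts to the shift $b\mapsto b+(1,\dots,1)$ in the Euler integrand; performing this shift on the dual side turns $-b$ into $(1,\dots,1)-b=\tilde b$, and the series expansion argument of Proposition \ref{series-cycle} still applies and gives $g_{\{1,\dots,m\},N}^\vee$ as $(-1)^m$ times a phase times $\prod_p x_{i_p}^{c_{i_p}-1}$ times a Gamma factor times $F_A(-a_N,\tilde b^N,\tilde c^N;x)$.

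The final step is to substitute $I_h(\De_N,\De_N^\vee)$ from Theorem \ref{H-intersection}(ii) and to reduce every resulting quotient of $\Gamma$-functions to a quotient of $\sin(\pi\,\cdot)$ via the reflection identity (\ref{Gamma}). Writing $1-\ga_k=-2\sqrt{-1}\,e^{\pi\sqrt{-1}c_k}\sin(\pi c_k)$ and analogously for $1-\be_k$, $\be_k-\ga_k$, $\al-1$, and $\al-\prod_p\ga_{i_p}$, a direct factor-by-factor verification (separately over the indices $p$, the indices $q$, and the single $a$-dependent piece) shows that every sine cancels between the Gamma side and the intersection side, every exponential phase cancels against the phases coming from $g_{I,N}$, $g_{I',N}^\vee$ and from $\al-\prod_p\ga_{i_p}$, and the $(2\pi\sqrt{-1})^m$ factor on both sides cancels, leaving exactly the announced rational prefactor $\prod(c_k-1)/a$ in case (i) and $\prod(1-c_k)/a$ in case (ii). The main obstacle is purely the bookkeeping of the signs $(-1)^r,\ (-1)^m$, of the exponential phases on both sides, and of the many $\Gamma$--$\sin$ manipulations, which must all conspire so that every transcendental factor cancels and only the rational prefactors announced in (i) and (ii) survive; no further geometric input is required beyond the results already stated.
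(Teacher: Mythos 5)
Your proposal is correct and follows essentially the same route as the paper: specialize (\ref{TPR-eq}), insert Proposition \ref{C-intersection} for the cohomological side, Theorem \ref{solutions-cycle} together with Proposition \ref{series-cycle} for the periods (with the dual obtained from the substitution on $(a,b,c)$ induced by $u^{-1}$), and then cancel all Gamma factors and phases against Theorem \ref{H-intersection} via the reflection formula (\ref{Gamma}). The only difference is cosmetic: for (ii) you spell out the identification $u^{-1}\vph^{\{1,\ldots,m\}}\leftrightarrow(-a,(1,\ldots,1)-b,(2,\ldots,2)-c)$ explicitly, which the paper compresses into ``a similar calculation shows (ii).''
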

\begin{proof}
  We prove (i). 
  By Proposition \ref{series-cycle} and Theorem \ref{solutions-cycle}, we have
  \begin{align*}
    g_{i_1 \cdots i_r} &= e^{\pi \sqrt{-1} (\sum b_{i_p} -\sum c_{i_p} +r)}
    \cdot \prod _{p=1} ^{r} \Ga (c_{i_p} -1) 
    \cdot \prod _{q=1} ^{m-r} \frac{\Ga(b_{j_q}) \Ga(c_{j_q}-b_{j_q})}{\Ga(c_{j_q})} \\ 
    & \cdot \frac{\Ga (1-a)}{\Ga (\sum c_{i_p}-a-r+1)} 
    \cdot \prod _{p=1} ^{r} x_{i_p} ^{1-c_{i_p}} \cdot 
    F_A (a+r-\sum c_{i_p} ,b^{i_1 \cdots i_r} ,c^{i_1 \cdots i_r};x).
  \end{align*}
  On the other hand, we can express $g_{i_{1} \cdots i_{r}} ^{\vee}$ like this  
  by the replacement 
  $$
  (a,b,c) \longmapsto (-a,-b,(2,\ldots ,2)-c),
  $$ 
  since $u^{-1}\vph$ is written as 
  $$
  u^{-1}\vph = \prod t_k ^{-b_k-1} (1-t_k)^{-c_k+b_k+1} \cdot (1-\sum x_k t_k )^a 
  dt_1 \wedge \cdots \wedge dt_m.
  $$
  Thus we obtain 
  \begin{align*}
    g_{i_{1} \cdots i_{r}}^{\vee} &=e^{\pi \sqrt{-1} (-\sum b_{i_p}+\sum c_{i_p}-r)} \\
    &\cdot \prod _{p=1} ^{r} \Ga (1-c_{i_p}) 
    \cdot \prod _{q=1} ^{m-r} \frac{\Ga(-b_{j_q}) \Ga(2-c_{j_q}+b_{j_q})}{\Ga(2-c_{j_q})} 
    \cdot \frac{\Ga (1+a)}{\Ga (-\sum c_{i_p}+a+r+1)} \\
    &\cdot \prod _{p=1} ^{r} x_{i_p} ^{c_{i_p}-1} \cdot 
    F_A (-a-r+\sum c_{i_p} ,-b^{i_1 \cdots i_r} ,(2,\ldots ,2)-c^{i_1 \cdots i_r};x).
  \end{align*}
  By the formula (\ref{Gamma}) and Theorem \ref{H-intersection}, we have 
  \begin{align*}
    & \frac{\Ga(1-a)\Ga(1+a)}{\Ga(\sum c_{i_p}-a-r+1)\Ga(-\sum c_{i_p}+a+r+1)}
    \cdot \prod_p \Ga(c_{i_p}-1)\Ga(1-c_{i_p}) \\
    & \cdot \prod_q \frac{\Ga(b_{j_q})\Ga(-b_{j_q}) \cdot \Ga(c_{j_q}-b_{j_q})\Ga(1-c_{j_q}+b_{j_q})}
    {\Ga(c_{j_q})\Ga(2-c_{j_q})}\\
    &=(2\pi \sqrt{-1})^m \cdot 
    \prod_k \frac{1}{c_k-1} 
    \cdot \prod_q \frac{c_{j_q}-b_{j_q}-1}{b_{j_q}}
    \cdot \frac{a}{a+r-\sum c_{i_p}}
    \cdot I_{h} (\De _{i_{1} \cdots i_{r}} ,\De _{i_{1} \cdots i_{r}}^{\vee} ) .
  \end{align*}
  Hence, we obtain (i) by Proposition \ref{C-intersection}. 
  A similar calculation shows (ii).  
\end{proof}

\section*{Acknowledgments}
The author thanks Professor Keiji Matsumoto for his useful advice and constant encouragement.

\end{document}